\documentclass[numbers,webpdf]{ima-authoring-template}%
\usepackage{mathtools}
\usepackage{caption}
\allowdisplaybreaks

\graphicspath{{Fig/}}

\theoremstyle{thmstyletwo}
\newtheorem{theorem}{Theorem}
\newtheorem{proposition}[theorem]{Proposition}

\newtheorem{corollary}[theorem]{Corollary}
\newtheorem{assumption}[theorem]{Assumption}

\newtheorem{remark}{Remark}

\numberwithin{equation}{section}

\DeclareUnicodeCharacter{0308}{\"o}
\usepackage{hyperref}
\usepackage{nameref}
\usepackage{algpseudocode}
\usepackage{braket,amsfonts}
\usepackage{amsmath,amssymb,amsxtra}
\usepackage{bm,mathrsfs,marvosym,eurosym}
\usepackage{mathtools,enumerate}
\usepackage{caption}
\usepackage{booktabs}
\usepackage{threeparttable}
\usepackage{comment}
\usepackage{siunitx}
\usepackage{nccmath}
\usepackage{bbm}
\usepackage{array}
\usepackage{multirow}
\usepackage[normalem]{ulem}
\usepackage{upgreek}
\usepackage{tikz}
\usepackage{url}       
\usetikzlibrary{arrows,calc,decorations.pathreplacing}
\usepackage[caption=false]{subfig}

\usetikzlibrary{arrows.meta, positioning}
\usepackage{algorithmicx}

\usepackage{graphicx}
\graphicspath{{./pics/}}

\def\d{{\mathrm d}}

\def\G{{\mathcal G}}
\def\F{{\mathcal F}}

\def\0{{\textbf{0}}}
\def\L{{\mathcal{L}}}

\def \II{(\Omega)}

\def \L{\mathcal{L}}
\def \t{\theta}
\def\bt{\boldsymbol{\t}}

\def\<{{\langle }}
\def\>{{\rangle }}

\allowdisplaybreaks

\begin{document}

\DOI{DOI HERE}
\copyrightyear{2026}
\vol{00}
\pubyear{2024}
\access{Advance Access Publication Date: Day Month Year}
\appnotes{Paper}
\copyrightstatement{Published by Oxford University Press on behalf of the Institute of Mathematics and its Applications. All rights reserved.}
\firstpage{1}

\title[Optimized Two-Step Coarse Propagators in Parareal Algorithms]{Optimized two-step coarse propagators in parareal Algorithms\thanks{The work of G. Li is supported by Hong Kong Research Grants Council (Project 17317022). The work of K. Zhang is supported by the NSF of China under the grant No. 12271207, and by the fundamental research funds for the central universities. The work of Z. Zhou is supported by National Natural Science Foundation of China (Project 12422117), Hong Kong Research Grants Council (15302323) and an internal grant of Hong Kong Polytechnic University (Project ID: P0053938, Work Programme: 4-ZZVA).}}

\author{Guanglian Li
\address{\orgdiv{Department of Mathematics}, \orgname{The University of Hong Kong}, \orgaddress{Hong Kong SAR, \country{P. R. China} (\href{mailto:lotusli@maths.hku.hk}{lotusli@maths.hku.hk})}}}

\author{Qingle Lin 
\address{\orgdiv{Department of Applied Mathematics}, \orgname{The Hong Kong Polytechnic University}, \orgaddress{ Hong Kong SAR, \country{P. R. China} (\href{mailto:qingle.lin@connect.polyu.hk}{qingle.lin@connect.polyu.hk}) }}}

\author{Kai Zhang
\address{\orgdiv{School of Mathematics}, \orgname{Jilin University}, \orgaddress{Changchun 130012, \country{P. R. China} (\href{mailto:zhangkai7609@gmail.com}{zhangkai7609@gmail.com})}}}

\author{Zhi Zhou
\address{\orgdiv{Department of Applied Mathematics}, \orgname{The Hong Kong Polytechnic University}, \orgaddress{  Hong Kong SAR, \country{P. R. China} (\href{mailto:zhizhou@polyu.edu.hk}{zhizhou@polyu.edu.hk})}}}

\authormark{G. Li, Q. Lin, K. Zhang, and Z. Zhou}

\received{Date}{0}{Year}
\revised{Date}{0}{Year}
\accepted{Date}{0}{Year}


\abstract{
{In this work, we propose a novel framework for accelerating the parareal algorithm, in which the coarse propagator is formulated as a two-step method and optimized  with respect to the convergence factor.} We derive a rigorous error estimate for the proposed two-step parareal algorithm, yielding an explicit bound on the linear convergence factor. This estimate is not only of theoretical interest: it provides a quantitative guideline for selecting and designing coarse propagators. Guided by this estimate, we {consider the linear parabolic equation as an illustrative example and }construct an optimized two-step coarse propagator~(O2CP) that delivers very fast convergence in practice. The resulting method attains an optimized convergence factor of approximately $0.0064$, substantially smaller than that of commonly used practical coarse propagators in the classical parareal setting, while keeping the computational cost moderate. Numerical experiments on linear and nonlinear parabolic equations fully support the theoretical analysis and demonstrate rapid convergence of the two-step parareal algorithm equipped with the O2CP.
}
\keywords{
parareal algorithm; multi-step correction; parabolic problems; convergence factor; optimized coarse propagator; $kn$-graphs.
}


\maketitle

\ 

\section{Introduction}

The numerical approximation of  time evolution problems  often employs time stepping schemes, which advance step by step in time. The sequential nature constitutes a major computational bottleneck, particularly for long-time simulations. Moreover, the hardware development has reached physical limits with respect to clock speed, and further increases in computational performance are possible only through the use of more cores~\cite{gander2025time}.

To overcome the bottleneck, various parallel-in-time (PinT) algorithms have been developed~\cite{Gander2024timeparallel}. Notable examples include parareal method \cite{LionsMadayTurinici:2001}, ParaDiag method~\cite{wu2018toward,GanderEtAl2021,gander2025time,wu2021parallel,MR2385067}, multigrid-reduction-in-time~\cite{dobrev2017two,falgout2014parallel,MR3716560}, and parallel full approximation scheme in space and time~\cite{MR2979518,MR3504550} etc. The parareal algorithm due to Lions, Maday, and Turinici~\cite{LionsMadayTurinici:2001} combines a computationally inexpensive coarse propagator (CP) with an accurate yet expensive fine propagator (FP). The CP generates a low-resolution approximation of the solution over large time steps, while the FP provides high-fidelity solutions on finer temporal subintervals. The CP is first applied sequentially to produce an initial guess, after which the FP is executed in parallel on each subinterval to iteratively refine the solution. Ideally, the algorithm achieves the accuracy of the FP solution at a cost comparable to that of the coarse propagators alone.

In practice, the FPs are typically chosen to be L-stable. The specific choice of the FP has little influence on the convergence behavior and is often assumed to be exact in most theoretical analyses. The choice of the CP greatly impacts the performance of the parareal method. Existing studies have mostly focused on single-step schemes as the CP. There are two primary criteria for evaluating the CP: the convergence factor $\gamma_{\ell}$ and the cost. The backward Euler (BE) method is inexpensive, but its convergence factor $\gamma_{\ell} \approx 0.298$ is relatively large. Several high-order L-stable schemes exhibit small convergence factor (e.g., three-stage Lobatto IIIC with $\gamma_{\ell} \approx 0.024$), but they are typically expensive to implement. One popular option is the two-stage Lobatto IIIC or SDIRK methods as the CP, which enjoys a relatively small convergence factor at moderate cost. Multi-step time-stepping methods offer excellent stability, high resolution, and low cost per step, but they have scarcely been explored. Audouze et al ~\cite{audouze} used single-step CPs to update more values within subintervals and Ait-Ameur and Maday \cite{ait2023multi,MR4167579} studied the parareal method with BDF$k$ schemes as FPs. The iteration structure for multi-step CPs has not been developed so far. These observations motivate the following question: \textit{Is it possible to achieve substantially faster convergence of the parareal algorithm using multi-step CPs?}

In this work, we consider the linear parabolic equation as an illustrative example. We address the above question by proposing a novel two-step parareal algorithm together with an easy-to-implement strategy for selecting the CPs.  The two-step formulation can be viewed as an extension of the primary block-iteration framework~\cite{gander2023unified}. We derive a rigorous error estimate and an explicit linear convergence factor for the two-step iteration. Our analysis also shows that simply adopting a standard linear multi-step scheme (e.g., BDF2) as the CP does not, by itself, guarantee fast convergence. This motivates the construction of an optimized two-step coarse propagator (O2CP), obtained by minimizing the upper bound on the convergence factor derived in our convergence analysis, subject to a consistency constraint. The resulting O2CP is A-stable and achieves an optimized convergence factor of approximately $0.0064$, markedly smaller than that of commonly used practical CPs in the classical parareal framework, while requiring only two Poisson solves per coarse time step. Numerical experiments for linear and semilinear parabolic problems corroborate the theory: the linear tests match the predicted convergence factor, and the nonlinear tests exhibit rapid convergence even under strong nonlinearity.

Recently, optimizing and learning-based techniques have become increasingly common for the parareal method, including stage-parallel preconditioners for Radau IIA methods~\cite{MunchEtAl2023}, 
spectral deferred correction preconditioners~\cite{CaklovicEtAl2025}, stochastic parareal variants that select the smoothest trajectory~\cite{pentland2022stochastic}, coarse-grid operator optimization for MGRIT~\cite{DeSterck:2021,yoda2024coarse}, physics-informed neural networks as CPs~\cite{IbrahimRuprecht:2023}, random neural networks to represent the discrepancy between coarse and fine solutions~\cite{gattiglio2024randnet}, and optimized coarse propagators~(OCPs)~\cite{jin2025optimizing}. The construction of the O2CP builds on the rigorous error estimator and preserves the stability and consistency. 

The rest of the paper is structured as follows. In Section~\ref{sec:prelim}, we review single-step and multi-step solvers and the classical parareal algorithm. In Section~\ref{sec:two-step}, we propose two-step parareal algorithms based on block iterations and derive error estimates. In Section~\ref{sec:optim}, we construct the O2CP to accelerate parareal iterations. Finally, in Section~\ref{sec:num}, we show the performance of the O2CP on linear and semilinear parabolic problems. Throughout, the notation $(\cdot,\cdot)$ denotes the $L^2(\Omega)$ inner product. Let $\{(\lambda_p,\varphi_p)\}_{p=1}^\infty$ be the eigenpairs of $A$, where $\{\varphi_p\}_{p=1}^\infty$ forms an orthonormal basis of $L^2(\Omega)$. We denote by $\mathcal{L}$ the space of bounded linear operators on $L^2(\Omega)$, equipped with the operator norm $\|\cdot\|_{\mathcal{L}}$.

\section{Preliminaries on parareal algorithm}\label{sec:prelim}
Throughout the theoretical part of this paper, we consider the linear parabolic equation as an illustrative example. In this section, we introduce the single-step and multi-step solvers and present the standard parareal algorithm.

In particular, we let $T >0$ be a fixed final time, and $\Omega$ be   a Lipschitz domain in $\mathbb{R}^d$ with $d\ge 1$. Let
$A$ be a positive definite, self-adjoint linear elliptic operator on the Hilbert space $(L^2\II,(\cdot, \cdot))$ with its domain $D(A)$ dense in $L^2 \II$. Fix $f\in L^2(0, T; L^2 \II)$ and $u_0 \in L^2\II$. Consider the following initial value problem: Find $u \in C((0,T];D(A))\cap C([0,T];L^2 \II)$ such that
\begin{equation}\label{eqn:pde}
\left\{\begin{aligned}
u'(t)+Au(t)&=f(t), \quad 0<t<T, \\
u(0)&=u_0.
\end{aligned}\right.
\end{equation}

A single-step scheme approximates the solution $U_{n+1}$ with the initial value $U_n$ and time step $\Delta T$ by
\begin{equation}\label{eqn:semi}
\overline{U}_{n+1} = R({\Delta T} A) U_{n} + {\Delta T} \sum_{i=1}^m P_i({\Delta T} A)
f(T_n+c_i\Delta T) ,\quad \text{for}~0\le n\le N_c-1,
\end{equation}
where $R(s)$ and the sequence $\{P_i(s)\}^m_{i=1}$ are rational functions, and ${c_i}$ are distinct real numbers in $[0,1]$.
A linear $k$-step scheme approximates the solution $U_{n+k}$ with $k$ initial values $U_{n},\cdots, U_{n+k-1}$ by $\overline{U}_{n+k}$:
\begin{equation}\label{eqn:k-step}
(\alpha_{k} +\beta_{k} \Delta TA)\overline{U}_{n+k} +\sum_{i=0}^{k-1} \left( \alpha_{i} +\beta_{i} \Delta TA \right) U_{n+i}=\Delta T\sum_{i=0}^{k} \beta_{i} f\left( T_{n}+i\Delta T \right) .
\end{equation}
Throughout we assume that the scheme \eqref{eqn:k-step} is of order $q$ in the sense that 
\begin{equation}\label{eqn:order_k_step}
    \sum_{i=0}^{k} \alpha_i = 0\quad{\text{and}}\quad \sum_{i=0}^k \alpha_i i^p = p \sum_{i=0}^k \beta_i i^{p-1}, \quad \text{for} ~p=1,\cdots,q. 
\end{equation}
For details on multi-step methods, see \cite[Chapter III]{hairer1993solving}. The BDF2 scheme, with the coefficients $\alpha_0 = 1/2$, $\alpha_1 = -2$, $\alpha_2=3/2$, $\beta_0=\beta_1 = 0$ and $\beta_{2}=1$,  satisfies the order condition \eqref{eqn:order_k_step} with $q=2$.  

Next, we describe the parareal algorithm for problem~\eqref{eqn:pde}, based on the single-step solver \eqref{eqn:semi}. We divide the time interval $(0,T)$ into $N$ equidistant
subintervals, each of length ${\Delta t} = T/N$. Let $\Delta T = J \Delta t~(J\in 2\mathbb{N}_{+})$ be the coarse step size, $N_c = T/\Delta T\in \mathbb{N}$, and $T_n = n\Delta T$.

The numerical solvers $\G$ and $\F$ operate on the coarse and fine time grids, respectively. Typically, $\G$ is a computationally cheap and low-order method, whereas $\F$ is a high-order but expensive single-step scheme \eqref{eqn:semi}.  
 Given the initial data $v\in L^2 \II$, the CP $\G$ and the FP $\F$ are respectively defined by
\begin{align}
	\G(T_n,\Delta T,v) &= R({\Delta T} A) v + {\Delta T} \sum_{i=1}^{M} P_i({\Delta T} A) f(T_n+C_i\Delta T) , \label{eqn:coarse-integrator}\\
	\F(t_n,\Delta t,v) &= r({\Delta t} A) v + {\Delta t} \sum_{i=1}^{m} p_i({\Delta t} A) f(t_n+c_i\Delta t) ,\label{eqn:fine-integrator}
\end{align}
where $M$ and $m$ denote the numbers of stages of the CP $\mathcal{G}$ and the FP $\mathcal{F}$, respectively, and $R$ and $P_i$ are rational functions for the CP $\mathcal{G}$, and $r$ and $p_i$ are for the FP $\mathcal{F}$. The parareal iteration is given in Algorithm \ref{alg:para}.

\begin{algorithm}[ht]
\centering
\caption{Single-step parareal algorithm}\label{alg:para}
\begin{algorithmic}[1]
\State \textbf{Initialization}: Compute $U^0_{n+1} = \mathcal{G}(T_n,\Delta T,U_n^0)$ with $U_0^0=u_0$, $n=0,\dotsc,N_c-1$.
\For{$k=0,1,\dotsc,K$}
    \State On each subinterval $[T_n,T_{n+1}]$, sequentially compute for $j=0,1,\dotsc,J-1$,
    \State \begin{equation}\label{eqn:FP_parallel_}
        \widetilde U_{n,j+1}^{k}
        = \mathcal{F}(T_n+j\Delta t,\Delta t,\widetilde U_{n,j}^{k}).
    \end{equation}
    with initial value $\widetilde U_{n,0}^{k}=U_n^k$, and set
    $\widetilde U_{n+1}^{k}=\widetilde U_{n,J}^{k}$.
    
    \State Perform sequential corrections, by computing $U^{k+1}_{n+1}$ by
    \State \begin{equation*}
        U^{k+1}_{n+1}
        = \mathcal{G}(T_n,\Delta T,U^{k+1}_{n})
        + \widetilde U_{n+1}^{k}
        - \mathcal{G}(T_n,\Delta T,U^{k}_{n}).
    \end{equation*}
    with $U^{k+1}_0=u_0$, for $n=0,1,\dotsc,N_c-1$.
    
    \State Check the stopping criterion.
\EndFor
\end{algorithmic}
\end{algorithm}

The convergence of the parareal algorithm for linear parabolic PDEs has been extensively studied~\cite{gander2007analysis,WuZhou:2015,Wu:IMA2015,dobrev2017two,hessenthaler2020multilevel,legoll2013micro,jin2025optimizing}. Below we discuss the case when the FP is an exact solver $\Phi_E$: for $v\in L^2\II$, $\Phi_E (v)$ is given by $\Phi_E (T_n,\Delta T,v)= w(\Delta T)$, with $w(\Delta T)$ solving problem \eqref{eqn:pde} with the initial data $v$ at $T_n$
\begin{equation}\label{eqn:exp}
\Phi_E(T_n,\Delta T,v) := w(\Delta T) = e^{- A \Delta T} v + \int_{0}^{\Delta T} e^{ -A(\Delta T-s) } f( T_n +s) \, \d s.
\end{equation}
Then we define the exact solution $U_n$ recursively via $U_{n+1} = \Phi_E (T_n,\Delta T,U_n)$ for $n=0,\dotsc,N_c-1,$ with $U_0=u_0$.

\section{Two-step parareal algorithm}\label{sec:two-step}
Motivated by the efficiency of multi-step methods \eqref{eqn:k-step} for stiff problems, which attain high accuracy with fewer function evaluations than single-step methods, we now extend the parareal method to a two-step structure, and derive error estimates, which form the basis of constructing OCPs.
\subsection{Algorithm description}
Since a two-step solver requires two initial values, we assume that $J\in 2\mathbb{N}$ and include an update at $T_{n+1/2}$ within each coarse time interval $[T_n,T_{n+1}]$. Gander et al \cite{gander2023unified} proposed the primary block iteration and $kn$-graph for Dahlquist's test equation $u'=\lambda u$. Fig.~\ref{fig: kn-graph}(a) shows the $kn$-graph for the single-step parareal algorithm:
\begin{equation*}
    U_{n+1}^{k+1} = R(\Delta TA)U_n^{k+1} + (r(\Delta t A)^J - R(\Delta TA)) U_n^k.\end{equation*}
The horizontal and vertical axes denote the time-step index $n$ and the parareal iteration index $k$, respectively. The node at position $(n,k)$ denotes  $U_n^k$. The diagram shows that the corrections $(\F - \G)(U_n^k)~(n = 1, \dots, N_c)$ can be evaluated in parallel.

\begin{figure}[ht]
\centering
\begin{minipage}{0.46\textwidth}
\centering
\begin{tikzpicture}[scale=1.4, dot/.style={circle,draw,fill=white,minimum size=2mm,inner sep=0pt},>={Stealth[length=2mm]}]
\draw[thick,-{Stealth[length=3mm]}] (0,0) -- (0,2) ;
\draw[thick,-{Stealth[length=3mm]}] (0,0) -- (2,0) ;
\node[dot] at (0.3,0.3) (00) {};
\node[dot] at (0.3,1.7) (01) {};
\node[dot] at (1.7,0.3) (10) {};
\node[dot] at (1.7,1.7) (11) {};
\draw[thick,-{Stealth[length=2mm]}] (0.4,0.4) -- (1.6,1.6) ;
\draw[thick,-{Stealth[length=2mm]}] (0.4,1.7) -- (1.6,1.7) node[midway,above] {$\G$} ;
\draw (0.6,1.1) node {$\F - \G$} ;
\draw (0.3,-0.1) node[below] {$n$};
\draw (-0.1,0.3) node[left] {$k$};
\draw (1.7,-0.1) node[below] {$n+1$};
\draw (-0.1,1.7) node[left] {$k+1$};
\end{tikzpicture}
\par\vspace{10pt}
\textbf{(a)}
\end{minipage}%
\hfill
\begin{minipage}{0.52\textwidth}
\centering
\begin{tikzpicture}[scale=1.3, dot/.style={circle,draw,fill=white,minimum size=2mm,inner sep=0pt},>={Stealth[length=2mm]}]
\draw[thick,-{Stealth[length=3mm]}] (0,0) -- (0,2) ;
\draw[thick,-{Stealth[length=3mm]}] (0,0) -- (3.5,0) ;
\node[dot] at (0.3,0.3) (00) {};
\node[dot] at (0.3,1.7) (01) {};
\node[dot] at (1.7,0.3) (10) {};
\node[dot] at (1.7,1.7) (11) {};
\node[dot] at (3.1,0.3) (20) {};
\node[dot] at (3.1,1.7) (21) {};
\draw[->] (0.4,1.8) to[bend left=45] (3,1.8);
\draw[thick,-{Stealth[length=2mm]}] (0.4,0.4) -- (3,1.63) node[midway,below right] {$\F^2-\G_{2}\F - \G_1$} ;
\draw[thick,-{Stealth[length=2mm]}] (1.8,1.7) -- (3,1.7) node[midway,above] {$\G_2$} ;
\draw (1.7,2.1) node {$\G_1$} ;
\draw (0.3,-0.1) node[below] {$\frac{m}{2}$};
\draw (-0.1,0.3) node[left] {$k$};
\draw (1.7,-0.1) node[below] {$\frac{m}2+\frac12$};
\draw (-0.1,1.7) node[left] {$k+1$};
\draw (3.1,-0.1) node[below] {$\frac{m}{2}+1$};
\end{tikzpicture}
\par\vspace{10pt}
\textbf{(b)}
\end{minipage}
\caption{$kn$-graphs for the parareal method (a) and two-step parareal method (b).}
\label{fig: kn-graph}
\end{figure}
Consider the following two-step structure for $m=0,1,\dotsc,2N_c-2$,
\begin{equation}\label{eqn:multi-step structure}
U_{{m}/{2}+1}^{k+1} = R_1 (\Delta TA) U_{{m}/{2}}^{k+1} + R_2 (\Delta TA)U_{{(m+1)}/{2}}^{k+1} + B_{-1}^0(\Delta TA) U_{{m}/{2}}^k,
\end{equation}
where $R_1$ and $R_2$ are suitable rational functions, and $B_{-1}^{0}: L^2 \II \rightarrow L^2 \II$ is linear. To ensure the consistency of \eqref{eqn:multi-step structure} with the single-step fine solution, we enforce 
\begin{equation*}
    B_{-1}^0(\Delta TA) = r(\Delta tA)^{J} - R_2(\Delta TA) r(\Delta tA)^{{J}/{2}} - R_1 (\Delta TA).
\end{equation*}
It ensures the finite iteration convergence property of the two-step parareal algorithm in Theorem~\ref{thm:finite conv.} below.
Fig.~\ref{fig: kn-graph}(b) shows the $kn$-graph of the two-step parareal algorithm with $\G_1 = R_1$ and $\G_2 = R_2$. To extend the algorithm to inhomogeneous problems, we define a two-step CP $\G^*$ for two initial values $v_1,v_2 \in L^2 \II$ (at $T_n-\tau$ and $T_n$) by
\begin{equation}\label{eqn:multi-step CP}
  \G^*(T_n,\tau,v_1,v_2) = R_1(\tau A) v_1 + R_2 (\tau A)v_2 + \tau (\alpha_2 + \beta_2 \tau A)^{-1} \sum_{i=0}^2 \beta_i f(T_n + (i-1)\tau ), 
\end{equation}
with the rational functions $R_1$ and $R_2$ given by 
\begin{equation}\label{eqn:R_a_b}
    R_1(s) = \frac{-\alpha_0 -\beta_0 s}{\alpha_2 + \beta_2 s} \quad \mbox{and} \quad R_2 (s)=\frac{-\alpha_1 - \beta_1 s}{\alpha_2 + \beta_2 s}.
\end{equation}
Then we present the two-step parareal algorithm in Algorithm~\ref{alg:two-step parareal}.
\begin{algorithm}[htbp!]

\centering
\caption{Two-step parareal algorithm}\label{alg:two-step parareal}
\begin{algorithmic}[1]
\State \textbf{Initialization}: Compute $U^0_{1/2} = \mathcal{G}(T_0,\Delta T /2,U_0^0)$ with $U_0^0=u_0$, and
\[
U^0_{m/2+1}
= \mathcal{G}^*(T_{m/2},\Delta T/2,U^0_{m/2},U^0_{(m+1)/2}),
\qquad m=0,1,\dotsc,2N_c-2.
\]

\For{$k=0,1,\dotsc,K$}
    \State On each subinterval $[T_{m/2},T_{m/2+1}]$, sequentially compute for $j=0,1,\dotsc,J-1$,
    \begin{equation}\label{eqn:FP_parallel}
    \widetilde U_{m/2,j+1}^{k}
    = \mathcal{F}(T_{m/2}+j\Delta t,\Delta t,\widetilde U_{m/2,j}^{k}),
    \end{equation}
    with initial value $\widetilde U_{m/2,0}^{k}=U^k_{m/2}$, and set
    $\widetilde U_{m/2+1}^{k}=\widetilde U_{m/2,J}^{k}$.

    \If{$k=0$}
        \State Set $U_{1/2}^0=\widetilde U_{0,J/2}^0$ and $U_1^0=\widetilde U_1^0$.
    \EndIf

    \State Perform sequential corrections, i.e., compute $U^{k+1}_{m/2+1}$ by
    \begin{equation}\label{eqn:two-step parareal}
    \begin{aligned}
    U^{k+1}_{m/2+1}
    &= \mathcal{G}^*(T_{m/2},\Delta T/2,U^{k+1}_{m/2},U^{k+1}_{(m+1)/2})
      + \widetilde U_{m/2+1}^{k} \\
    &\quad
      - \mathcal{G}^*(T_{m/2},\Delta T/2,U^{k}_{m/2},\widetilde U^{k}_{m/2,J/2}),
    \end{aligned}
    \end{equation}
    with $U^{k+1}_0=u_0$ and $U_{1/2}^{k+1}=U_{1/2}^0$, for $m=1,\dotsc,2N_c-2$.

    \State Check the stopping criterion.
\EndFor
\end{algorithmic}
\end{algorithm}
\begin{remark}
The FP values $\widetilde{U}_{m/2}^{k}$~\eqref{eqn:FP_parallel} are computed in parallel within each coarse interval $[T_{{m}/{2}},T_{{m}/{2}+1}]$, for $m=0,1,\dotsc,2N_c-2,$ and the costs of~\eqref{eqn:FP_parallel} and~\eqref{eqn:FP_parallel_} are identical. However, due to the overlapping of the coarse intervals, $2N_c$ processors are required to implement Algorithm~\ref{alg:two-step parareal}. The two-step CP is implemented sequentially $2N_c$ times at each iteration. Nevertheless, its cost is on par with the BE, and is much lower than high-order L-stable single-step methods. 
\end{remark}

\begin{remark}
If $R_1=0$ and $R_2$ is the stability function of a single-step solver~\eqref{eqn:semi} in \eqref{eqn:multi-step CP}, then Algorithm~\ref{alg:two-step parareal} reduces to the parareal algorithm with FCF-relaxation~\cite{dobrev2017two} and half the coarse step size. 

\end{remark}

We define the fine solution $U_n$ recursively by 
\begin{equation*}
U_{n,j+1}=\F\left( T_{n}+j\Delta t,\Delta t,U_{n,j} \right),\quad\text{for}~j=0,1,\dotsc,J-1,
\end{equation*}
with ${U}_{n,0}=U_{n}$ and initial condition $U_0=u_0$. Setting $U_{n+1}:=U_{n,J}$, we also denote the midpoint value by $U_{n+1/2}:=U_{n,J/2}$. Finally, we introduce the compact notations 
\begin{align*}
\F_{{\Delta T}/{2}}(T_n,U_n):=U_{n+1/2}\quad\text{and}\quad \F_{\Delta T}(T_n,U_n):=U_{n+1},
\end{align*}
which summarize the integration over half a coarse step and a full coarse step, respectively.
The next result gives a finite-step convergence of Algorithm~\ref{alg:two-step parareal}.
\begin{theorem}\label{thm:finite conv.}
For Algorithm~\ref{alg:two-step parareal}, $U_{
{m}/{2}}^k=U_{{m}/{2}}$ holds when $m\leq 2k$.
\end{theorem}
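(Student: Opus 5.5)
Since the claim is that after $k$ corrections the first $2k+1$ grid values (indices $m/2$ with $m\le 2k$) coincide with the fine solution, I would argue by induction on $k$, in the same way as the classical finite-termination property of parareal. The key tool is the consistency of the correction \eqref{eqn:two-step parareal} with the fine propagator. If the inputs at iteration $k$ already agree with the fine solution, then the fine sweep \eqref{eqn:FP_parallel} started from $U^k_{m/2}=U_{m/2}$ reproduces the fine values exactly. In particular $\widetilde U^k_{m/2,J/2}=\F_{\Delta T/2}(T_{m/2},U_{m/2})=U_{(m+1)/2}$ and $\widetilde U^k_{m/2+1}=U_{m/2+1}$. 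Here I use that the fine solution on $[T_{m/2},T_{m/2+1}]$ is the same whether it is generated from $T_{\lfloor m/2\rfloor}$ or from $T_{m/2}$: $\F$ is a one-step map on the uniform grid of size $\Delta t$, and $J/2\in\mathbb{N}$, so the half-integer nodes are fine grid points.

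\textbf{Base case $k=0$.} Here $m\le 0$, i.e. only $U^0_0=u_0=U_0$. The \texttt{if} branch additionally resets $U^0_{1/2}=\widetilde U^0_{0,J/2}=U_{1/2}$ and $U^0_1=U_1$. It is convenient to record this, because $U^{k+1}_{1/2}=U^0_{1/2}$ for all later $k$, so the index $m=1$ is always exact once $k\ge 0$ (after that reset). I would strengthen the induction hypothesis to: $U^k_{m/2}=U_{m/2}$ for all $m\le 2k$, together with $U^k_{1/2}=U_{1/2}$ (this covers $k=0$, where $m=1>2k$).

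\textbf{Inductive step.} Assume the hypothesis for $k$. We need $U^{k+1}_{m/2}=U_{m/2}$ for $m\le 2k+2$. The indices $0$ and $1/2$ hold by definition and by the reset. For $m'=m+2$ with $1\le m\le 2k$, I proceed by an inner induction on $m$ in the sequential correction. Assume $U^{k+1}_{m/2}=U_{m/2}$ and $U^{k+1}_{(m+1)/2}=U_{(m+1)/2}$; the latter needs $m+1\le 2k+1$, which is covered either by the inner induction or by the index $1/2$. By the outer hypothesis (since $m\le 2k$), $U^k_{m/2}=U_{m/2}$, so $\widetilde U^k_{m/2,J/2}=U_{(m+1)/2}$ and $\widetilde U^k_{m/2+1}=U_{m/2+1}$. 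Then in \eqref{eqn:two-step parareal} the two $\G^*$ terms have identical arguments $(T_{m/2},\Delta T/2,U_{m/2},U_{(m+1)/2})$. They cancel exactly, including the source terms, which depend only on $T_{m/2}$. Hence $U^{k+1}_{m/2+1}=\widetilde U^k_{m/2+1}=U_{m/2+1}$. Running $m$ from $1$ up to $2k$ gives all indices up to $(2k+2)/2$. For the case $m=1$ one should double-check that the loop starts at $m=1$, so that $U^{k+1}_1$ is not produced by a correction at $m=0$. Here I would note that $U^{k+1}_1$ must equal $U_1$: for $k\ge 0$, $U^{k+1}_1$ is obtained from the correction with $m=0$ if one interprets the algorithm that way, and that correction is exact by the same cancellation argument using $U^{k+1}_{1/2}=U^k_{1/2}=U_{1/2}$.

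\textbf{Main obstacle.} The cancellation itself is trivial. The delicate points are the bookkeeping of the starting indices $U_{1/2}$ and $U_1$ (the $k=0$ reset and the loop range), and verifying the identity $\widetilde U^k_{m/2,J/2}=U_{(m+1)/2}$ for odd $m$. For odd $m$, the interval $[T_{m/2},T_{m/2+1}]$ straddles two coarse intervals, so one must use that the fine trajectory is defined consistently on the global fine grid. I would state this as a small lemma: $\F$ composed $j$ times from $T_{n}$ equals the global fine solution at $T_n+j\Delta t$, by the one-step semigroup property of the fine recursion.
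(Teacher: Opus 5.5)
Your proposal is correct and follows the same route as the paper: induction on $k$, where the two $\mathcal{G}^*$ terms in \eqref{eqn:two-step parareal} cancel once their arguments agree with the fine solution, so that $U^{k+1}_{m/2+1}=\widetilde U^k_{m/2+1}=U_{m/2+1}$. The paper writes this out only for the two new indices $m=2k-1$ and $m=2k$. Your inner induction over $m$ (showing that the already-exact values stay exact at iteration $k+1$) and your explicit treatment of $U_{1/2}$, of $U_1$, and of the fine trajectory started at half-integer times supply bookkeeping that the paper uses without stating it.
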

\begin{proof}
We prove the assertion by mathematical induction on $k$. The cases $k=0,1$ hold trivially. Suppose that $U_{{m}/{2}}^k=U_{{m}/{2}}$ holds for $m \leq 2k$. Then for $k+1$, by letting $m=2k-1$ in~\eqref{eqn:two-step parareal}, we obtain 
\begin{align*}
U^{k+1}_{k+1/{2}} &= \G^*(T_{k-{1}/{2}}, {\Delta T}/{2}, U^{k+1}_{k-{1}/{2}}, U^{k+1}_{k})  + \widetilde{U}_{k+{1}/{2}}^{k}\\
&\quad -  \G^*(T_{k-{1}/{2}},{\Delta T}/{2}, U^{k}_{k-{1}/{2}}, \F_{{\Delta T}/{2}}(T_{k-{1}/{2}},U_{k-{1}/{2}}^k))\\
&= \G^*(T_{k-{1}/{2}}, {\Delta T}/{2}, U_{k-{1}/{2}}, U_{k})  + \F_{\Delta T}(T_{k - {1}/{2}},U_{k - {1}/{2}}) \\
&\quad -  \G^*(T_{k -{1}/{2}}, {\Delta T}/{2}, U_{k - {1}/{2}}, U_{k})\\
&=  \F_{\Delta T}(T_{{k - {1}/{2}}},U_{k - {1}/{2}}) = U_{k + {1}/{2}}.
\end{align*}
The argument also holds for $U_{k+1}^{k+1}$ since $\widetilde{U}_{k+1}^k = \F_{\Delta T} (T_k,U_k)=U_{k+1}.$
\end{proof}

\subsection{Error estimation} 
We present an error bound on the two-step parareal algorithm in Algorithm~\ref{alg:two-step parareal}. 
\begin{theorem}\label{thm:error estimation}
Consider Algorithm \ref{alg:two-step parareal} with the CP~\eqref{eqn:multi-step CP} and the FP~\eqref{eqn:fine-integrator}. Let $\rho_1 (s)$ and $\rho_2 (s)$ be defined by \begin{equation}\label{eqn:rho}
        \rho_1(s)+\rho_2 (s)=R_2(s)\quad \mbox{and}\quad \rho_1(s)\rho_2 (s)=-R_1 (s),
\end{equation} 
and suppose $|\rho_1(s)|,|\rho_2(s)|<1$ for $s\in \mathbb{R}_+$. Define $\gamma^*$ the convergence factor by
\begin{equation*}
\gamma^*(r,R_1,R_2,J) = \sup_{s\in\mathbb{R}_+}\frac{|r({2s}/{J})^J-R_2(s)r({2s}/{J})^{{J}/{2}}-R_1(s)|}{(1-|\rho_1(s)|)(1-|\rho_2(s)|)}.
\end{equation*}
Then there exists $c$ independent of $k$ such that 
\begin{equation*}
\max_{0 \leq m\leq 2N_c} \|U_{{m}/{2}}^k-U_{{m}/{2}}\|_{L^2\II} \leq c\gamma^* (r,R_1,R_2,J)^k.
\end{equation*}

\end{theorem}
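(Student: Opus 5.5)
We plan to work with the error $e^k_{m/2}:=U^k_{m/2}-U_{m/2}$ and to diagonalize in the eigenbasis $\{\varphi_p\}$ of $A$, so that every operator becomes a scalar rational function of $s=\lambda_p\Delta T$. We first derive the error recursion. The fine solution satisfies $U_{m/2+1}=\F_{\Delta T}(T_{m/2},U_{m/2})$ and $U_{(m+1)/2}=\F_{\Delta T/2}(T_{m/2},U_{m/2})$. Subtracting the exact identity
\begin{equation*}
U_{m/2+1}=\G^*(T_{m/2},\Delta T/2,U_{m/2},U_{(m+1)/2})+U_{m/2+1}-\G^*(T_{m/2},\Delta T/2,U_{m/2},U_{(m+1)/2})
\end{equation*}
from \eqref{eqn:two-step parareal} cancels all source terms, since both $\G^*$ and $\F$ are affine with identical $f$-parts. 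With $s=\Delta T A$, the operators $R_i$ evaluated at the half step, and $r_J:=r(\Delta tA)$, we obtain
\begin{equation*}
e^{k+1}_{m/2+1}=R_1e^{k+1}_{m/2}+R_2e^{k+1}_{(m+1)/2}+\bigl(r_J^{J}-R_2r_J^{J/2}-R_1\bigr)e^k_{m/2}.
\end{equation*}
This is exactly \eqref{eqn:multi-step structure}, with $e^{k+1}_0=e^{k+1}_{1/2}=0$. Because the CP is applied with step $\Delta T/2$, we rescale $s$ so that $r(2s/J)$ and $R_i(s)$ appear, matching the definition of $\gamma^*$. By Parseval it then suffices to bound each spectral component uniformly in $s\in\mathbb{R}_+$.

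Fix $s$ and write the recursion as a linear two-step difference equation in the half-index $m$. We factor its characteristic polynomial $z^2-R_2z-R_1=(z-\rho_1)(z-\rho_2)$ using \eqref{eqn:rho}. The inhomogeneous solution with zero starting values is then a discrete convolution $e^{k+1}_{j}=\sum_{i}g_{j-i}\,b\,e^k_{i}$, where $b=r_J^J-R_2r_J^{J/2}-R_1$. The Green's function is $g_\ell=\sum_{a+c=\ell}\rho_1^a\rho_2^c$, the convolution of two geometric sequences. Hence $|g_\ell|\le\sum_{a+c=\ell}|\rho_1|^a|\rho_2|^c$ and $\sum_{\ell\ge0}|g_\ell|\le \frac{1}{(1-|\rho_1|)(1-|\rho_2|)}$. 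Note that the real-line factorization may give complex conjugate roots; this is harmless because we only use moduli.

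Taking the sup-norm over $m$ (Young's inequality $\ell^1*\ell^\infty$) gives, for each mode, $\max_m|\hat e^{k+1}_{m/2}|\le\gamma^*\max_m|\hat e^k_{m/2}|$. Summing over modes is not directly compatible with a max over $m$ outside the $L^2$ norm. We therefore carry the argument at the operator level instead: the map $e^k\mapsto e^{k+1}$ is $\sum_i g_{j-i}(A)b(A)e^k_i$, and $\|g_\ell(\Delta TA)b(\Delta TA)\|_{\mathcal L}=\sup_s|g_\ell(s)b(s)|$. This would give $\max_m\|e^{k+1}_{m/2}\|\le\sum_\ell\sup_s|g_\ell b|\,\max_m\|e^k_{m/2}\|$.

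The main obstacle is that $\sum_\ell\sup_s$ is not $\sup_s\sum_\ell$. The fix we would try first is to bound $|g_\ell(s)b(s)|$ by $|b(s)|\sum_{a+c=\ell}|\rho_1|^a|\rho_2|^c$ and control the interchange. An alternative that avoids the interchange entirely is to use the $\ell^2$-in-time norm, $\bigl(\sum_m\|e_{m/2}\|^2\bigr)^{1/2}$, where Parseval in both space and discrete time lets Young's inequality act modewise and gives the contraction by exactly $\gamma^*$. We then pass back to the max norm, absorbing a factor $\sqrt{2N_c}$ into $c$, which is allowed since $c$ need only be independent of $k$. Iterating yields the bound $c\,(\gamma^*)^k$ with $c\lesssim\sqrt{N_c}\max_m\|e^0_{m/2}\|$, finite by stability of the initialization under $|\rho_i|<1$.
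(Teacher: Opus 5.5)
Your argument is correct. Its core is the same as the paper's: the modal decomposition in $\{\varphi_p\}$, the factorization $z^2-R_2z-R_1=(z-\rho_1)(z-\rho_2)$, and the kernel bound $\sum_\ell|g_\ell|\le[(1-|\rho_1|)(1-|\rho_2|)]^{-1}$. The paper reaches the same factor by unrolling first the $\rho_1$-recursion and then the $\rho_2$-recursion in the max norm, which is your Green's function written as two successive geometric sums.

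You genuinely differ at the step you flagged as the main obstacle. The paper never seeks a one-step contraction in $\max_m\|\cdot\|_{L^2(\Omega)}$. Because the modes decouple, it iterates the per-mode bound to get $\max_m|e^k_{m/2,p}|\le\gamma_c(d_p)^k\max_m|e^0_{m/2,p}|$. Only then does it square and sum over $p$, using $\max_m|e^0_{m/2,p}|^2\le\sum_m|e^0_{m/2,p}|^2$ once. This gives $c=\bigl(\sum_m\|E^0_{m/2}\|^2_{L^2(\Omega)}\bigr)^{1/2}$, which the paper then crudely bounds by $\sum_m\|E^0_{m/2}\|_{L^2(\Omega)}$.

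Your $\ell^2$-in-time route yields the same constant, but through a genuine contraction $\|e^{k+1}\|\le\gamma^*\|e^k\|$ in $\ell^2(\{0,\dots,2N_c\};L^2(\Omega))$. You do not need Parseval in discrete time for this. Young's inequality $\ell^1*\ell^2\to\ell^2$ applies directly to the zero-extended causal convolution, and truncating to $m\le 2N_c$ only decreases the norm.

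What each buys: your version gives monotone decay of a fixed norm at every iteration, which is convenient for perturbation or nonlinear extensions. The paper's version is slightly more elementary and keeps frequency-wise information in the max norm.

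Drop the ``first fix'' via $\sum_\ell\sup_s|g_\ell(s)b(s)|$. It does not produce $\gamma^*$, and either your $\ell^2$ argument or the paper's iterate-per-mode-then-sum trick makes it unnecessary. Also, finiteness of $c$ needs only boundedness of the operators $R_i(\tau A)$, not stability of the initialization.
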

\begin{proof}
The two-step parareal algorithm reads
\begin{equation*}
\begin{aligned}
U^{k+1}_{{m}/{2}+1} &= \G^*(T_{{m}/{2}}, {\Delta T}/{2}, U^{k+1}_{{m}/{2}}, U^{k+1}_{{(m+1)}/{2}})  + \F_{\Delta T}(T_{{m}/{2}},U_{{m}/{2}}^k) \\
&\quad -  \G^*(T_{{m}/{2}}, {\Delta T}/{2}, U^{k}_{{m}/{2}}, \F_{{\Delta T}/{2}}(T_{{m}/{2}},U_{{m}/{2}}^k)),
\end{aligned}
\end{equation*}
and the fine solution $U_{{m}/{2}+1}$ satisfies
\begin{equation*}
\begin{aligned}
    U_{{m}/{2}+1} &= \G^*(T_{{m}/{2}}, {\Delta T}/{2}, U_{{m}/{2}}, U_{{(m+1)}/{2}})  + \F_{\Delta T}(T_{{m}/{2}},U_{{m}/{2}})\\
    &\quad -  \G^*(T_{{m}/{2}}, {\Delta T}/{2}, U_{{m}/{2}}, \F_{{\Delta T}/{2}}(T_{{m}/{2}},U_{{m}/{2}})),
    \end{aligned}
\end{equation*}
Let the error $E_{{m}/{2}}^k=U_{{m}/{2}}^k - U_{{m}/{2}}$ for $m=0,1,\dotsc,2N_c$. Taking the difference of the last two equations gives
\begin{align*}
 E_{{m}/{2}+1}^{k+1}&=R_1({\Delta TA}/{2})E_{{m}/{2}}^{k+1} + R_2({\Delta TA}/{2})E_{{(m+1)}/{2}}^{k+1} + r(\Delta t A)^J E_{{m}/{2}}^k \\
    &\quad -R_1({\Delta TA}/{2})E_{{m}/{2}}^{k} - R_2({\Delta TA}/{2})r(\Delta tA)^{{J}/{2}}E_{{m}/{2}}^{k}.
\end{align*}
Let $\delta T={\Delta T}/{2}$. Then the recursion for the error $E^k_{m/2}$ is given by
\begin{equation}\label{eqn:E_n^k}
\begin{aligned}
E_{{m}/{2}+1}^{k+1} &= R_1(\delta TA)E_{{m}/{2}}^{k+1} + R_2(\delta TA)E_{{(m+1)}/{2}}^{k+1} \\
&\quad + \big(r({2\delta TA}/{J})^J-r({2\delta TA}/{J})^{{J}/{2}}R_2(\delta TA)-R_1 (\delta TA)\big)E_{{m}/{2}}^k.
\end{aligned}
\end{equation}
Let $(\lambda_p,\varphi_p)$ be an eigenpair of $A$, and $e_{{m}/{2},p}^k:=(E_{{m}/2}^k,\varphi_p)$. Then, with $d_p = \delta T\lambda_p$, testing Equation \eqref{eqn:E_n^k} by $\varphi_p$ gives
\begin{equation*}
\begin{aligned}
e_{{m}/{2}+1,p}^{k+1} &= R_1(d_p)e_{{m}/{2},p}^{k+1} + R_2(d_p)e_{{(m+1)}/{2},p}^{k+1} \\
&\quad + \big(r({2d_p}/{J})^J-r({2d_p}/{J})^{{J}/{2}}R_2(d_p)-R_1 (d_p)\big) e_{{m}/{2},p}^k.
\end{aligned}
\end{equation*}
By the definitions of $\rho_1(s)$ and $\rho_2(s)$ in \eqref{eqn:rho}, we obtain the recursion
\begin{align}\label{eqn:iter1}
    e_{{m}/{2}+1,p}^{k+1} - \rho_2 (d_p) e_{(m+1)/2,p}^{k+1} =  \rho_1(d_p) \big(e_{(m+1)/2,p}^{k+1} - \rho_2 (d_p) e_{{m}/{2},p}^{k+1}\big) + I_{{m}/{2},p}^k,
\end{align}
with $I_{{m}/{2},p}^k := \big(r({2d_p}/{J})^J-r({2d_p}/{J})^{{J}/{2}}R_2(d_p)-R_1 (d_p)\big) e_{{m}/{2},p}^k.$ Unrolling the recursion \eqref{eqn:iter1} $m+1$ times and using $e_{0,p}^{k+1}=e_{1/2,p}^k=0$ give 
\begin{equation}\label{eqn:equation}
  e_{{m}/{2}+1,p}^{k+1} - \rho_2 (d_p) e_{{(m+1)}/{2},p}^{k+1} = \sum_{i=0}^{m}\rho_1(d_p)^{i} I_{{(m-i)}/{2},p}^k.
\end{equation}
 Hence, using the condition $|\rho_1(s)|<1$ when $s\in \mathbb{R}_+$, we deduce
\begin{align*}
  |e_{{m}/{2}+1,p}^{k+1}| &\leq |\rho_2 (d_p)| |e_{(m+1)/2,p}^{k+1}| + \sum_{i=0}^{m}|\rho_1(d_p)|^{i} |I_{(m-i)/{2},p}^k|\\
  & \leq |\rho_2 (d_p)| |e_{(m+1)/2,p}^{k+1}| + {(1-|\rho_1(d_p)|)}^{-1}\max_{1\leq m\leq 2N_c} |I_{{m}/{2},p}^k|.
\end{align*}
Since $|\rho_2(s)| <1$ for $s\in \mathbb{R}_+$, unrolling the inequality $m+1$ times gives
\begin{equation}\label{eqn:g_c}
    \begin{aligned}
|e_{{m}/{2}+1,p}^{k+1}| &\leq |\rho_{2}(d_p)|^{m+1} |e_{{1}/{2},p}^k| + \sum_{j=0}^{m} \frac{|\rho_2 (d_p)|^{j}}{1-|\rho_1(d_p)|} \max_{1\leq m\leq 2N_c} |I_{{m}/{2},p}^k|\\
&\leq {\big( (1-|\rho_1(d_p)|)(1-|\rho_2(d_p)|)\big)}^{-1}  \max_{1\leq m\leq 2N_c} |I_{{m}/{2},p}^k|\\
& \leq \frac{|r({2d_p}/{J})^J-r({2d_p}/{J})^{{J}/{2}}R_2(d_p)-R_1 (d_p)|}{(1-|\rho_1(d_p)|)(1-|\rho_2(d_p)|)}  \max_{1\leq m\leq 2N_c} |e_{{m}/{2},p}^k|\\
&=: \gamma_c (r,R_1,R_2,J,d_p) \max_{1\leq m\leq 2N_c} |e_{{m}/{2},p}^k|.
\end{aligned}
\end{equation}
Taking the maximum over $m$ for $1\leq m\leq 2N_c$  yields
\begin{equation*}
\begin{aligned}
   \max_{1\leq m\leq 2N_c} |e_{{m}/{2},p}^{k+1}| &\leq \gamma_c (r,R_1,R_2,J,d_p) \max_{1\leq m\leq 2N_c} |e_{{m}/{2},p}^k| \\
   &\leq \gamma_c (r,R_1,R_2,J,d_p) ^{k+1}\max_{1\leq m\leq 2N_c} |e_{{m}/{2},p}^0|. 
   \end{aligned}
\end{equation*}
Then the error $\|E_{{m}/{2}+1}^{k+1}\|_{L^2(\Omega)}$ is bounded by
\begin{align*}
&\max_{0\leq m\leq 2N_c}\| E_{{m}/{2}}^{k+1} \|_{L^2 \II}^2 \leq \sum_{p=1}^\infty \max_{0\leq m\leq 2N_c}|e_{{m}/{2},p}^k|^2 \\
\leq &\sup_{p\in \mathbb{N}_{+}}\gamma_c (r,R_1,R_2,J,d_p)^{2(k+1)}\sum_{p=1}^\infty\max_{1\leq m\leq 2N_c} |e_{{m}/{2},p}^0|^2.
\end{align*}
Since $d_p\in \mathbb{R}_+$ for all $p \in \mathbb{N}_{+}$, the factor can be further bounded by
\begin{equation*}
\begin{aligned}
\sup_{p\in \mathbb{N}_{+}}\gamma_c (r,R_1,R_2,J,d_p) &\leq \sup_{s\in\mathbb{R}_+}\frac{|r({2s}/{J})^J-R_2(s)r({2s}/{J})^{{J}/{2}}-R_1(s)|}{(1-|\rho_1(s)|)(1-|\rho_2(s)|)} \\
&=: \gamma^*(r,R_1,R_2,J).
\end{aligned}
\end{equation*}
Using the estimate $\max_{1\leq m\leq 2N_c}|e_{{m}/{2},p}^0|^2\leq \sum_{m=1}^{2N_c} |e_{{m}/{2},p}^0|^2$, we get
\begin{equation*}
\max_{0\leq m\leq 2N_c}\| E_{{m/2}}^{k+1} \|_{L^2 \II}^2  \leq \gamma^*(r,R_1,R_2,J)^{2(k+1)} \sum_{m=1}^{2N_c} \|E_{{m}/{2}}^0\|^2_{L^2 \II}.
\end{equation*}
By the elementary inequality $\sqrt{\sum_{m=1}^{2N_c}a_i^2} \leq \sum_{m=1}^{2N_c}|a_i|$, we obtain the desired result with  $c = \sum_{m=1}^{2N_c} \|E_{{m}/{2}}^0\|_{L^2 \II}$. 
\end{proof}

\begin{remark}\label{rmk:sharper}
The estimate in Theorem~\ref{thm:error estimation} is concise but not sharp. We can derive a sharper but more involved result. Unrolling the estimate~\eqref{eqn:equation} $m+1$ gives
    \begin{align*}
    e_{{m}/{2}+1,p}^{k+1} =\sum_{j=0}^{m} \frac{ \rho_{2} \left( d_{p} \right)^{m+1-j} - \rho_{1} \left( d_{p} \right)^{m+1-j}}{\rho_{2} \left( d_{p} \right) -\rho_{1} \left( d_{p} \right)} I_{j/2,p}^{k}.
    \end{align*}
Following the steps in Theorem~\ref{thm:error estimation},  $\|E_{m/2+1}^{k+1}\|_{L^2(\Omega)}$ can be bounded by
\begin{equation*}
    \max_{0\leq m\leq 2N_{c}} \| E_{m/2}^{k+1}\|_{L^{2}\left( \Omega \right)} \leq  {\kappa}^{\ast} \left( r,R_{1},R_{2},J,N_{c} \right)^{k+1} \sum_{m=1}^{2N_{c}} \| E_{m/2}^{0}\|_{L^{2}\left( \Omega \right)},
\end{equation*}
where $\kappa_c (r,R_1,R_2,J,s,N_c)$ and ${\kappa}^*(r,R_1,R_2,J,N_c)$ are defined by
\begin{equation}\label{eqn:k^*}
\begin{aligned}
{\kappa}_c(r,R_1,R_2,J,s,N_c) &= \frac{|r\left( 2s/J \right)^{J} -R_{2}\left( s \right) r\left( 2s/J \right)^{J/2} -R_{1}\left( s \right) |}{|\rho_{1} \left( s \right) -\rho_{2} \left( s \right) |} \\
&\quad \cdot \sum_{j=0}^{2N_{c}} |\rho_{2} \left( s \right)^{2N_c+1-j} -\rho_{1} \left( s \right)^{2N_c+1-j} |, \\ 
{\kappa}^*(r,R_1,R_2,J,N_c) &=  \sup_{s\in \mathbb{R}_{+}} \kappa_c (r,R_1,R_2,J,s,N_c). 
\end{aligned}
\end{equation}
Nevertheless, the convergence factor $\gamma^{*}$ is more amenable to optimization in Section~\ref{sec:optim}. 

\end{remark}

\begin{remark}
$\rho_1(s)$ and $\rho_2(s)$ are two roots of the equation
$  z^2 - R_1(s) z - R_2 (s)=0$, given by $
\rho_{1,2}(s)={\big(R_1(s)\pm\sqrt{(R_1(s))^2+4R_2(s)}\big)}/{2}$.
Note that they may contain imaginary parts since $(R_1(s))^2+4R_2(s)$ can be negative when $s\in \mathbb{R}_+.$ The conditions $|\rho_1(s)|,|\rho_2|(s) <1$ imply that the two-step CP is stable. \end{remark}

Now we illustrate Theorem~\ref{thm:error estimation} with the BDF2 scheme as the CP, with 
\begin{equation}\label{eqn:BDF2}
    R_1(s)=\frac{-\frac13}{1+\frac23 s} \quad \mbox{and}\quad R_2(s)=\frac{\frac43}{1+\frac23s}.
\end{equation}
We take the three-stage Radau IIA method as the FP, whose stability function $r(s)$ is given by
\begin{equation}\label{eqn:RIIA3}
    r(s) = \frac{1 - \frac{2}{5}s + \frac{1}{20}s^2}{1 + \frac{3}{5}s + \frac{3}{20}s^2 + \frac{1}{60}s^3}.
\end{equation}
The plots of $|\rho_1(s)|, |\rho_2(s)|, \gamma_c(r,R_1,R_2,J,s)$~\eqref{eqn:g_c} and $\kappa_c (r,R_1,R_2,J,s,10^3)$~\eqref{eqn:k^*} are given in Fig.~\ref{fig:rho_BDF2}. Fig.~\ref{fig:rho_BDF2}~(a) shows that $|\rho_1 (s)|=1+\mathcal{O}(s)$ as $s\rightarrow 0^+$ and $|\rho_2(s)|\leq C<1$ for all $s\in \mathbb{{R}}_+$. Fig.~\ref{fig:rho_BDF2}~(b) indicates that the convergence factor $\gamma_c (r,R_1,R_2, J,s)$ varies little for large $J$ and is close to the limit
\begin{equation}\label{eqn:g_e}
    \gamma_e(R_1, R_2,s) = \frac{|e^{-2s} - R_2 (s)e^{-s} - R_1(s)|}{(1-|\rho_1 (s)|) (1-|\rho_2 (s)|)}.
\end{equation}
Theorem~\ref{thm:reduced conv.} below indicates $\sup_{s\in \mathbb{R}_+} |\gamma_c(r,R_1,R_2,J,s) - \gamma_e (R_1,R_2,s)| \sim \mathcal{O}(J^{-5})$ for the three-stage Radau IIA as the FP. Fig.~\ref{fig:rho_BDF2}~(c) shows that  $\kappa^\ast$ in~\eqref{eqn:k^*} also varies very little for large $J$ and is close to the limit 
\begin{equation}\label{eqn:k_e}
{\kappa}_e(R_1,R_2,s,N_c) = \frac{|e^{-2s} -R_{2}\left( s \right) e^{-s} -R_{1}\left( s \right) |}{|\rho_{1} \left( s \right) -\rho_{2} \left( s \right) |} \sum_{j=0}^{2N_{c}} |\rho_{2} \left( s \right)^{2N_c+1-j} -\rho_{1} \left( s \right)^{2N_c+1-j} |.
\end{equation}
The numerical results in Section~\ref{sec:linear} indicate that the estimate $\kappa^\ast$ in~\eqref{eqn:k^*} is sharp.

\begin{figure}[hbt!]
\begin{minipage}[t]{0.33\textwidth}
\includegraphics[width=\textwidth,trim={0.1cm 0.2cm 1cm 1cm},clip]{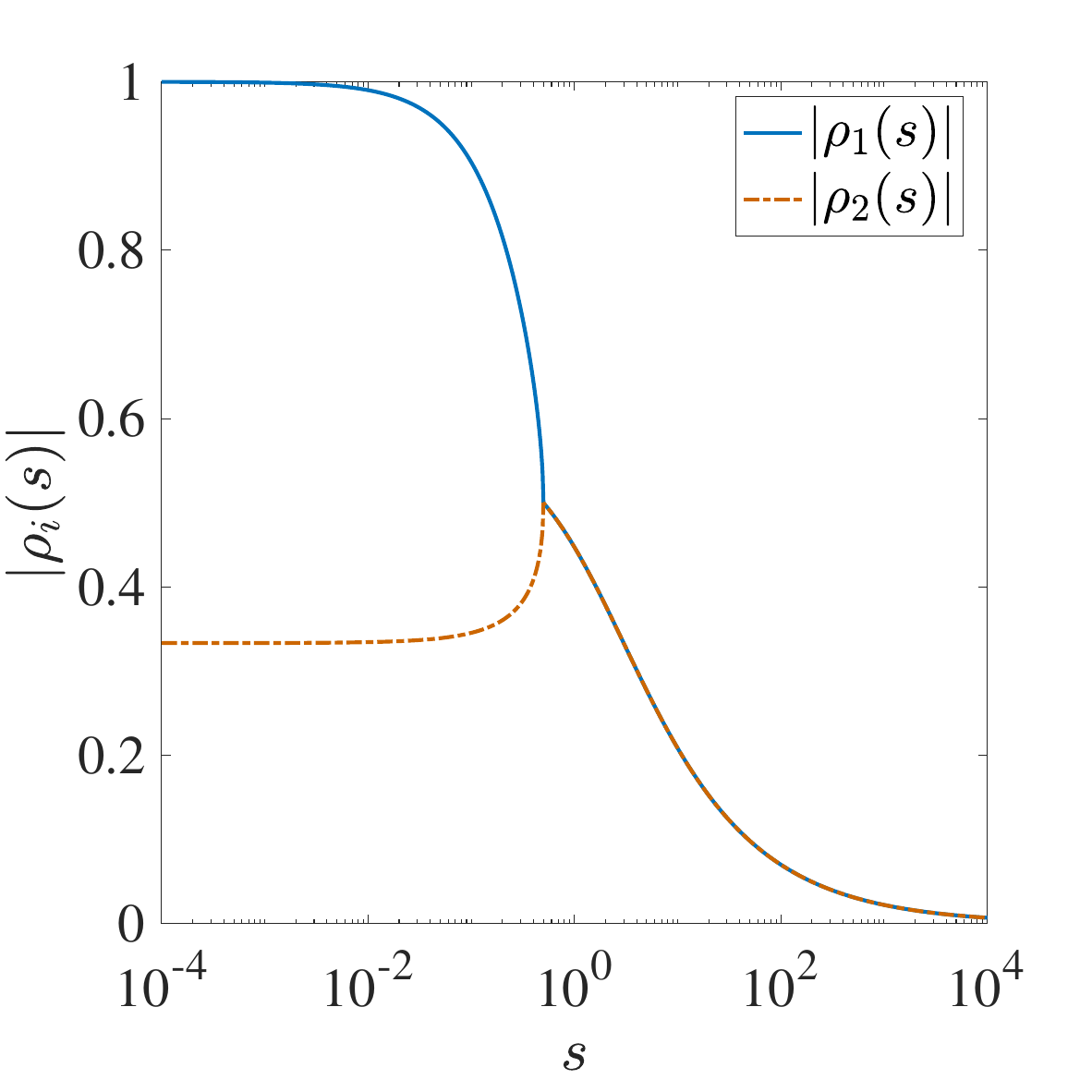}

\centering
(a)
\end{minipage}\hfill
\begin{minipage}[t]{0.33\textwidth}
\includegraphics[width=\textwidth,trim={0.1cm 0.2cm 1cm 1cm},clip]{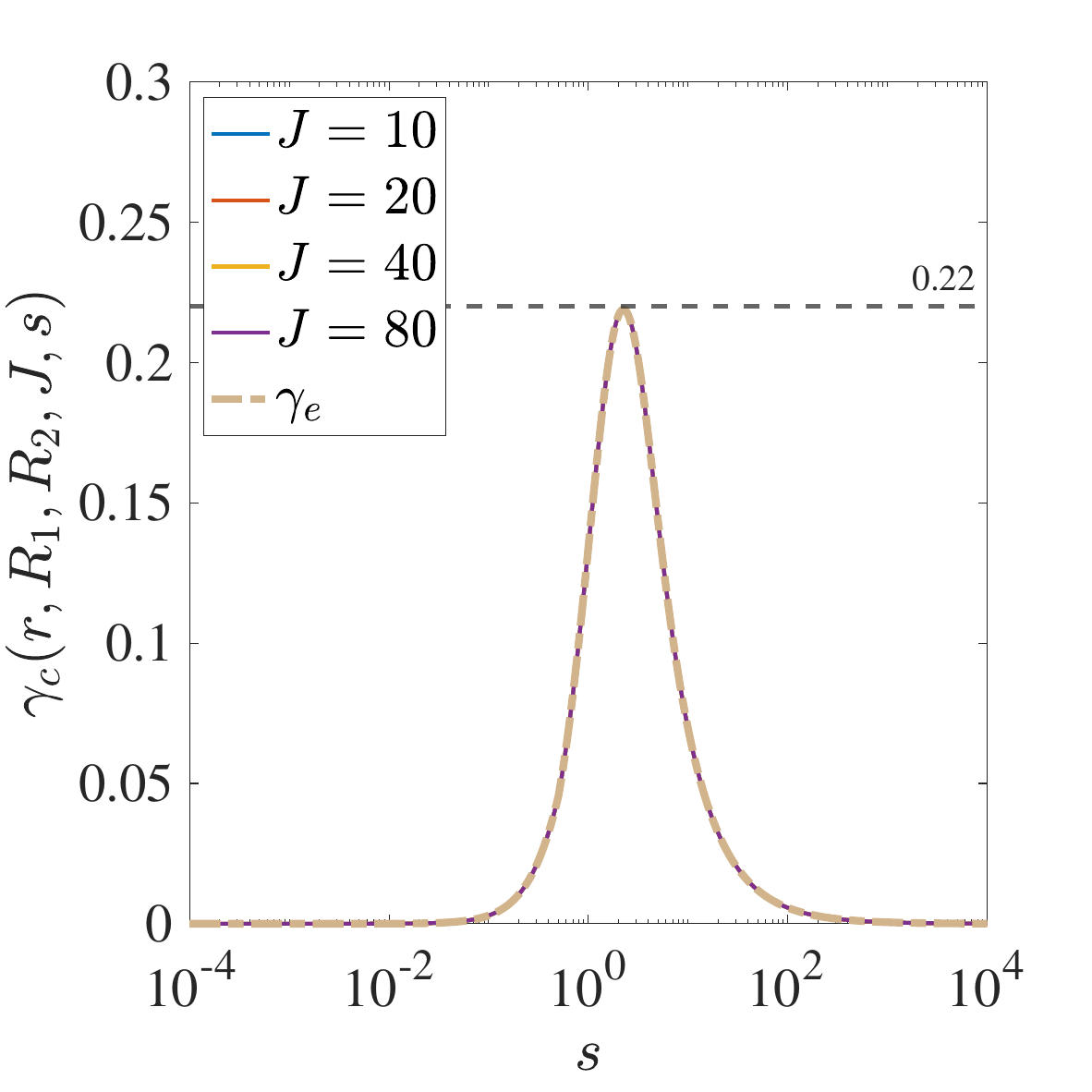}

\centering
(b)
\end{minipage}\hfill
\begin{minipage}[t]{0.33\textwidth}
\includegraphics[width=\textwidth,trim={0.1cm 0.2cm 1cm 1cm},clip]{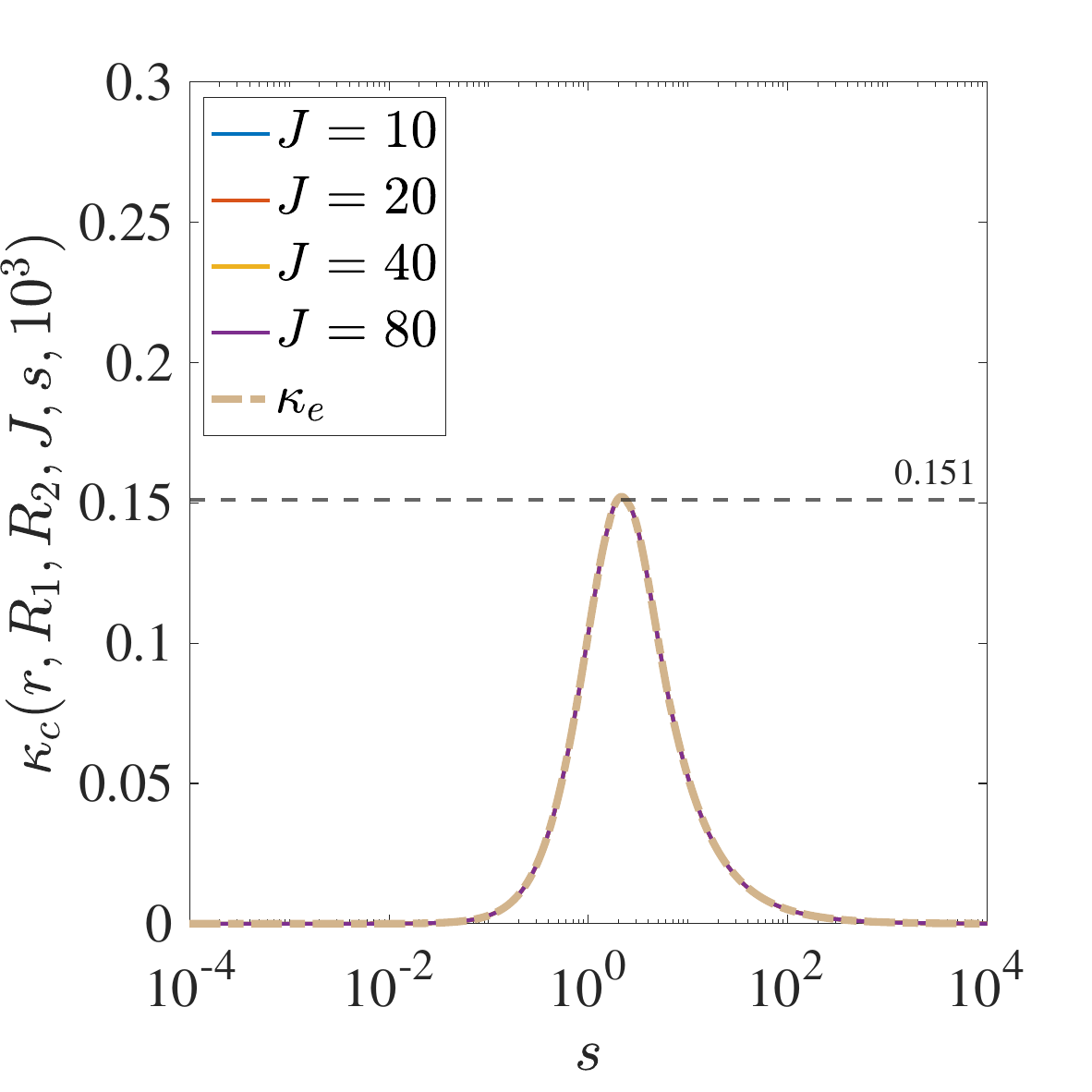}

\centering
(c)
\end{minipage}
\caption{With BDF2 as the CP and three-stage Radau IIA as the FP: (a) $|\rho_i (s)|,~i=1,2$; (b) $\gamma_c(r,R_1,R_2,J,s)$ for $J\in \{10,20,40,80\}$ and $\gamma_e$ in~\eqref{eqn:g_e}, and (c) $\kappa_c(r,R_1,R_2,J,s,10^3)$ for $J\in \{10,20,40,80\}$ and $\kappa_e$ in~\eqref{eqn:k_e}.}
    \label{fig:rho_BDF2}
\end{figure}

\subsection{Reduced convergence factor}
Motivated by Fig.~\ref{fig:rho_BDF2}(b)-(c), We define factor $\gamma_e^*$ and $\kappa_e^*$ by 
\begin{equation}\label{eqn:reduced conv.}
\begin{aligned}
\gamma_e^*(R_1,R_2) &= \sup_{s\in \mathbb{R}_+} |\gamma_e (R_1,R_2,s)|,\\
\kappa_e^*(R_1,R_2,N_c) &= \sup_{s\in \mathbb{R}_+} |\kappa_e (R_1,R_2,s,N_c)|,
\end{aligned}
\end{equation}
and discuss the connection between $\gamma^*(r,R_1,R_2,J)$ and $\gamma_e^* (R_1,R_2)$, and that between $\kappa^*(r,R_1,R_2,J,N_c)$ and $\kappa_e^* (R_1,R_2,N_c)$. We impose the following conditions on the CPs and FPs.     The assumption on $r$ can be satisfied if the FP is a $q$-th order L-stable solver. The assumption on $\rho_1$ and $\rho_2$ is motivated by Fig.~\ref{fig:rho_BDF2}.

\begin{assumption}\label{assum:s0}
There exists $s_0>0$ such that $R_1$, $R_2$, $r$, $\gamma_c$ and $\gamma_e$ satisfy the following conditions:
\begin{itemize}
\item[{\rm(i)}] When $s\in (0,s_0)$: $0<c_1 s\leq (1-|\rho_1 (s)|)(1-|\rho_2(s)|)$ and $|r(s)-e^{-s}|\leq c_2 s^{q+1}$ with $c_1>0$ and $q\geq 2$.
\item[{\rm(ii)}] $\gamma_c (r,R_1,R_2,J,s)$ and $\gamma_e (R_1,R_2,s)$ attain their supremum when $s\in (0,s_0)$ for all $J\in 2\mathbb{N}_{+}$.
\item[{\rm(iii)}] $\kappa_c (r,R_1,R_2,J,s,N_c)$ and $\kappa_e (R_1,R_2,s,N_c)$ attain their supremum when $s\in (0,s_0)$ for all $J\in 2\mathbb{N}_{+}$.
\end{itemize}
\end{assumption}

The next theorem shows how $\gamma^* (r,R_1,R_2, J)$ approximates $\gamma_e^* (R_1,R_2)$. 

\begin{theorem}\label{thm:reduced conv.}
Let $R_1$,~$R_2$,~$r,~\gamma_c$, and $\gamma_e$ satisfy Assumption~\ref{assum:s0} (i) and (ii). Then there exists $C>0$ independent of $J$ such that 
\begin{equation*}
|\gamma^* (r,R_1,R_2,J)-\gamma_e^* (R_1,R_2)| \leq {C}{J^{-q}}.
\end{equation*}
\end{theorem}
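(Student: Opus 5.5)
By Assumption~\ref{assum:s0}(ii), both suprema are attained on $(0,s_0)$. Writing $\gamma^*=\gamma_c(r,R_1,R_2,J,s_J)$ and $\gamma_e^*=\gamma_e(R_1,R_2,s_e)$ with $s_J,s_e\in(0,s_0)$, we get the two-sided bound
\begin{equation*}
\gamma_e(s_J)-\gamma_e(s_J)+\gamma_c(s_J)\le \gamma^* ,\qquad \gamma^*-\gamma_e^*\le \gamma_c(s_J)-\gamma_e(s_J),
\end{equation*}
and symmetrically $\gamma_e^*-\gamma^*\le \gamma_e(s_e)-\gamma_c(s_e)$. Hence
\begin{equation*}
|\gamma^*-\gamma_e^*|\le \sup_{s\in(0,s_0)}|\gamma_c(r,R_1,R_2,J,s)-\gamma_e(R_1,R_2,s)|.
\end{equation*}
The task therefore reduces to a uniform pointwise estimate on $(0,s_0)$.

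The two quantities share the denominator $D(s)=(1-|\rho_1(s)|)(1-|\rho_2(s)|)$. By the reverse triangle inequality,
\begin{equation*}
|\gamma_c-\gamma_e|\le \frac{\big|r(2s/J)^J-e^{-2s}\big|+|R_2(s)|\,\big|r(2s/J)^{J/2}-e^{-s}\big|}{D(s)}.
\end{equation*}
Assumption~\ref{assum:s0}(i) gives $D(s)\ge c_1 s$. Moreover, $R_2$ is a rational function with no poles on $[0,s_0]$, since $\alpha_2+\beta_2 s\ne0$ there for the schemes under consideration. So it suffices to show $|r(2s/J)^{n}-e^{-2sn/J}|\le C s J^{-q}$ for $n=J$ and $n=J/2$, uniformly in $s\in(0,s_0)$. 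The factor $s$ then cancels the $1/s$ from $D(s)$.

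For this, set $a=r(h)$ and $b=e^{-h}$ with $h=2s/J\in(0,2s_0/J)$. Telescoping gives
\begin{equation*}
a^n-b^n=(a-b)\sum_{j=0}^{n-1}a^{j}b^{n-1-j}.
\end{equation*}
Since $|b|\le1$ and, by L-stability/A-stability of the FP, $|r(h)|\le1$ on $\mathbb{R}_+$, each term of the sum is bounded by $1$. Using $|a-b|\le c_2h^{q+1}$ from (i), this yields
\begin{equation*}
|a^n-b^n|\le n c_2 h^{q+1}\le J c_2 (2s/J)^{q+1}=c_2 2^{q+1}s^{q+1}J^{-q}.
\end{equation*}
Dividing by $c_1 s$ gives a bound $C s^{q}J^{-q}\le C s_0^{q}J^{-q}$, which is uniform in $s$. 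This proves the theorem with $C$ depending only on $c_1,c_2,q,s_0$ and $\sup_{[0,s_0]}|R_2|$.

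The one point needing care is that (i) requires the condition $|r(h)-e^{-h}|\le c_2h^{q+1}$ at $h=2s/J$, not at $s$. This is fine because $2s/J\le s<s_0$ whenever $J\ge2$. The second point is the bound $|r|\le1$ on $\mathbb{R}_+$. It is not stated explicitly in Assumption~\ref{assum:s0}, but it follows from the stated L-stability of the FP. If one prefers not to use it, one can instead write $|r(h)|\le e^{-h}+c_2h^{q+1}\le 1+c_2h^{q+1}$ for small $h$, so that $|a|^{j}\le(1+c_2h^{q+1})^{J}\le e^{c_2 2^{q+1}s_0^{q+1}J^{-q}}$, which is bounded independently of $J$.

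Finally, the supremum is attained at some point of $(0,s_0)$ rather than at $s\to0^+$, so no limiting issue arises near $0$. The estimate above is in any case uniform there, since the ratio behaves like $s^{q}$.
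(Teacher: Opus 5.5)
Your proof is correct and follows essentially the same route as the paper's: restrict to $(0,s_0)$ via Assumption~(ii), split the numerator difference into the $r(2s/J)^J-e^{-2s}$ and $R_2(s)\bigl(r(2s/J)^{J/2}-e^{-s}\bigr)$ pieces, telescope to get a bound of order $s^{q+1}J^{-q}$, and cancel against the lower bound $c_1 s$ on the denominator. You also make explicit several points the paper leaves implicit: that $|\sup_s\gamma_c-\sup_s\gamma_e|\le\sup_s|\gamma_c-\gamma_e|$, that Assumption~(i) applies at $h=2s/J<s_0$, and that the telescoped sum needs $|r|\le 1$ (or your alternative bound $|r(h)|\le 1+c_2h^{q+1}$), which tightens the argument without changing it.
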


\begin{proof}
In view of Assumption~\ref{assum:s0} (ii), it suffices to analyze the case $s\in (0,s_0)$. By Assumption~\ref{assum:s0} (i), we have
\begin{align*}
&|\gamma_c (r,R_1,R_2,J,s)-\gamma_e (R_1,R_2,s)|\\
\leq& \frac{|e^{-2s}-r\left( {2s/J} \right)^{J} |}{\left( 1-|\rho_{1} \left( s \right) | \right) \left( 1-|\rho_{2} \left( s \right) | \right)} +\frac{|R_{2}\left( s \right) ||e^{-s}-r\left( {2s/J} \right)^{{J/2}} |}{\left( 1-|\rho_{1} \left( s \right) | \right) \left( 1-|\rho_{2} \left( s \right) | \right)}\\
\leq& \frac{|e^{-2s}-r\left( {2s/J} \right)^{J} |}{c_1 s} + \frac{C|e^{-s}-r\left( {2s/J} \right)^{{J/2}} |}{c_1s}=:\frac{{\rm I_1}}{c_1s} + \frac{C~{\rm I_2}}{c_1s}.
\end{align*}
The first factor $\rm I_1$ can be bounded as
\begin{align*}
{\rm I_1}\leq |e^{-{2s}/J} -r({2s}/{J})|~|\sum_{i=0}^{J-2} (e^{-{2s}/{J}})^{(J-1)-i} r({2s}/{J})^i|\leq c_2 {(2s)^{q+1}}{J^{-q}}.
\end{align*}
Similarly, the factor ${\rm I}_2$ can be bounded as ${\rm I_2} \leq c_2 {(2s)^{q+1}}/(2J^q).$ Then taking the supremum over $s\in(0,s_0)$ gives
\begin{equation*}
\sup_{s\in (0,s_0)}|\gamma_c (r,R_1,R_2,J,s)-\gamma_e (R_1,R_2,s)| \leq C {s_0^{q}}{J^{-q}} \leq C J^{-q}.
\end{equation*}

\end{proof}

We illustrate Theorem \ref{thm:reduced conv.} in Fig.~\ref{fig:rho_Radau_LIIIC} (c) with three FPs.  The next corollary shows how $\kappa^* (r, R_1, R_2, J, N_c)$ approximates $\kappa_e^*$. The proof is similar to Theorem~\ref{thm:reduced conv.}, and hence omitted.
\begin{corollary}
    Let $R_1$,~$R_2$,~$r$,~ $\kappa_c$ and $\kappa_e$ satisfy Assumption~\ref{assum:s0} (i) and (iii). Then there exists $C>0$ independent of $J$ and $N_c$ such that 
\begin{equation*}
|\kappa^* (r,R_1,R_2,J,N_c)-\kappa_e^* (R_1,R_2,N_c)| \leq {C}{J^{-q}}.
\end{equation*}
\end{corollary}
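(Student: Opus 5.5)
By Assumption~\ref{assum:s0}(iii), both suprema $\kappa^*$ and $\kappa_e^*$ are attained in $(0,s_0)$. Since $|\sup_s f-\sup_s g|\le \sup_s|f-g|$, it suffices to bound $\sup_{s\in(0,s_0)}|\kappa_c(r,R_1,R_2,J,s,N_c)-\kappa_e(R_1,R_2,s,N_c)|$ uniformly in $N_c$.

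The two functions share the common factor
\[
S(s):=\frac{1}{|\rho_1(s)-\rho_2(s)|}\sum_{j=0}^{2N_c}\bigl|\rho_2(s)^{2N_c+1-j}-\rho_1(s)^{2N_c+1-j}\bigr|,
\]
and differ only in the numerator. Exactly as in the proof of Theorem~\ref{thm:reduced conv.}, the triangle inequality and the telescoping identity $a^J-b^J=(a-b)\sum_{i}a^{J-1-i}b^i$ give
\[
|\kappa_c-\kappa_e|\le S(s)\bigl({\rm I_1}+|R_2(s)|\,{\rm I_2}\bigr)\le C\,S(s)\,s^{q+1}J^{-q}.
\]
Here we use $|r(2s/J)|\le1$ and $e^{-2s/J}\le1$ for $s$ small, together with $|r(s)-e^{-s}|\le c_2s^{q+1}$ from Assumption~\ref{assum:s0}(i). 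We also need $R_2$ to be bounded on $(0,s_0)$, which holds because it is a rational function without poles on $\mathbb{R}_+$ (guaranteed by the stability requirement).

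The key step is a bound on $S(s)$ that does not depend on $N_c$. Writing
\[
\frac{\rho_2^{n}-\rho_1^{n}}{\rho_2-\rho_1}=\sum_{i=0}^{n-1}\rho_2^{i}\rho_1^{n-1-i},
\]
we get $\bigl|(\rho_2^n-\rho_1^n)/(\rho_2-\rho_1)\bigr|\le\sum_{i}|\rho_2|^i|\rho_1|^{n-1-i}$. This bound also handles the degenerate case $\rho_1=\rho_2$ by continuity. Summing over $n=1,\dots,2N_c+1$ and extending to an infinite double geometric sum gives
\[
S(s)\le\sum_{n\ge1}\sum_{i=0}^{n-1}|\rho_2|^i|\rho_1|^{n-1-i}=\frac{1}{(1-|\rho_1(s)|)(1-|\rho_2(s)|)}\le\frac{1}{c_1s}
\]
by Assumption~\ref{assum:s0}(i). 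This estimate is uniform in $N_c$ and is the main point to check carefully.

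Combining the two bounds yields
\[
\sup_{s\in(0,s_0)}|\kappa_c-\kappa_e|\le C s_0^{q}J^{-q}/c_1\le CJ^{-q},
\]
with $C$ independent of $J$ and $N_c$, which proves the corollary. The remaining care point is that the supremum restriction relies on (iii) holding for all $N_c$. I would read the assumption in that way, since otherwise $s_0$ could depend on $N_c$ and the constant would no longer be uniform.
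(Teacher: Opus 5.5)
Your proposal is correct and follows the route the paper intends when it omits the proof as ``similar to Theorem~\ref{thm:reduced conv.}'': restrict to $(0,s_0)$ via Assumption~\ref{assum:s0}(iii), bound the numerator difference by $C s^{q+1}J^{-q}$ by telescoping, and control the remaining factor by $1/(c_1 s)$. Your $N_c$-uniform estimate $S(s)\le 1/\bigl((1-|\rho_1(s)|)(1-|\rho_2(s)|)\bigr)$ is exactly the step the paper leaves implicit (it also shows $\kappa_c\le\gamma_c$); note that boundedness of $R_2$ follows more directly from $|R_2|=|\rho_1+\rho_2|<2$.
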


section{Optimizing two-step CPs}\label{sec:optim}
Now we optimize a two-step CP to expedite the convergence of Algorithm~\ref{alg:two-step parareal}, based on $\gamma_e^*$ in~\eqref{eqn:reduced conv.}. The approach has two distinct features. First, $\gamma_e^*$ is independent of the specific structure of the FP and the coarsening factor $J$. Theorem~\ref{thm:reduced conv.} shows that when the FP is of high order and $J$ is not small, the actual convergence factor approximates well the reduced one. Second, $\gamma_e^*$ is amenable to optimization.

\subsection{Optimization procedure}
We define two parametric rational functions.
\begin{equation*}
    R_1(s,\bt) = \frac{a_1+a_2 s}{1+e^{b_1}s}\quad\mbox{and}\quad R_2 (s,\bt) = \frac{(1-a_1)+c_2 s}{1+e^{b_1} s},
\end{equation*}
with the parameter vector $\bt = (a_1,a_2,b_1,c_2)$. The parametric two-step CP reads 
\begin{equation}\label{eqn:M-OCP}
\begin{aligned}
    \G^*_{\bt}(T_n,\tau,v_1,v_2) &= R_1(\tau A,\bt) v_1 + R_2 (\tau A,\bt)v_2 \\
    &\quad + \tau \big( -a_2 f(T_n -\tau )-c_2 f(T_n) + e^{b_1} f(T_n + \tau) \big), 
\end{aligned}
\end{equation}
The scheme is consistent, cf. the first condition in~\eqref{eqn:order_k_step}. Then $\rho_1(s,\bt)$ and $\rho_2(s,\bt)$ are given in \eqref{eqn:rho}.
To allow greater flexibility in parameter optimization, we do not impose order conditions. The optimized $R_1(s,\hat{\bt})$ and $R_2(s,\hat{\bt})$ should minimize $\gamma^*_e$ \eqref{eqn:reduced conv.}. To optimize  the CP $\G_{\bt}^*$, the supremum in $\gamma_e^*$ over $s \in \mathbb{R}_+$ is approximated using a finite sample set $\Lambda$ consisting of $N_\Lambda$ uniformly spaced points in $[0, \lambda_{\max}]$, with the parameters $N_\Lambda$ and $\lambda_{\max}$ chosen based on $A$. Then the optimization problem reads
\begin{equation}\label{eqn:opt problem}
    \min_{\bt} \gamma_e^* (R_1,R_2) \quad \text{s.t.} \quad |\rho_1 (s,\bt)|,|\rho_2(s,\bt)| <1, ~s >0.
\end{equation}
The constraints ensure the stability of $\G_{\bt}^*$. To enforce the constraints strictly, we employ the barrier method \cite[Chapter 13]{luenberger1984linear}, using the barrier function
\begin{equation*}
    \L_{\rm b} (\bt) = \frac{1}{N_\Lambda} \sum_{s\in \Lambda} \log(1-\rho_1(s,\bt)^2) + \log(1-\rho_2(s,\bt)^2).
\end{equation*}
Then the total loss $\L_{\rho}$ is defined by
\begin{equation}\label{eqn:loss_mu}
    \L_\mu (\bt) = \L_{\rm s} (\bt) - \mu \cdot \L_{\rm b} (\bt),\quad \mbox{with }\L_{\rm s} (\bt) = \sup_{s\in \Lambda} \frac{|e^{-2s} - R_2(s,\bt)e^{-s}-R_1(s,\bt)|}{(1-|\rho_1(s,\bt)|)(1-|\rho_2 (s,\bt)|)},
\end{equation}
where $\mu>0$ is a weight. To promote the convergence of the barrier method for problem~\eqref{eqn:opt problem}, we initialize $\mu$ with $\mu_0$ and geometrically decrease its value as $\mu_{i+1} = \sigma \mu_i$ for some $\sigma \in (0, 1)$. Let $\bt_k^*$ denote a minimizer of the loss $\L_{\mu_k} (\bt)$. Any limit point of the sequence $\{\bt_k^* \}$ solves problem~\eqref{eqn:opt problem} \cite[Chapter 13]{luenberger1984linear}. The full procedure for obtaining the OCP $\G_{\bt}^*$ is given in Algorithm~\ref{algo:M-OCP}. To minimize the loss $\L_{\mu}$, we adopt subgradient descent; see \cite{goffin1977convergence, bianchi2022convergence, scaman2022convergence, khaled2020better} for the convergence analyses.
In practice, the stopping criterion at step 8 is fulfilled when the norm of the loss gradient drops below a given tolerance.

\begin{algorithm}[hbt!]
\caption{Optimize the two-step CP }
\begin{algorithmic}[1]
\State Specify parameters $N_\Lambda, \lambda_{\max}$ and $\sigma$, and initialize $\bt$ with $\bt_0$.
\For {$i=0,1,\ldots$}
\State $\mu_{i+1}=  \sigma \mu_{i}.$
\For {$k=0,1,\ldots,K$}
\State Construct the loss  $\L_{\mu_i}(\bt) = \L_{\rm s}(\bt) - \mu_{i} \cdot \L_{\rm b}(\bt)${.}
\State Minimize the loss $\mathcal{L}_{\mu_i}(\bt)$  and update $\bt$.
\EndFor
\State Check the stopping criterion.
	\EndFor
	\Return $\bt$.
\end{algorithmic}
\label{algo:M-OCP}
\end{algorithm}

\subsection{Optimized two-step coarse propagators}
Algorithm~\ref{algo:M-OCP} yields an optimized two-step CP~(O2CP) 
\begin{equation}\label{eqn:TS OCP_E}
R_1(s,\hat{\bt}) = \frac{0.02178-0.00047s}{1+0.56380s}\quad\mbox{and}\quad R_2(s,\hat{\bt}) = \frac{0.97822 - 0.46300s}{1+0.56380s}.
\end{equation}
The reduced convergence factor $\gamma_e^* (R_1(s,\hat{\bt}),R_2(s,\hat{\bt}))$ is about $0.0064$. It can be verified that the {O2CP} satisfies Assumption~\ref{assum:s0}. We illustrate $|\rho_1(s)|,|\rho_2(s)|$ and $\gamma_c$ using the three-stage Radau IIA as the FP in Fig.~\ref{fig:rho_OCP}. The function $|\rho_2 (s)|$ is increasing but stays bounded within $[0,0.9)$. $\gamma_e(R_1,R_2,s)$ exhibits several local maxima, with a maximum value around $0.0064$. The factor $\kappa_e^*$ is slightly smaller than $\gamma_e^*$. Figs.~\ref{fig:rho_OCP} and~\ref{fig:rho_BDF2} (CP is BDF2) indicate that, for each $J$, the convergence factor $\gamma_c$ of the O2CP is much smaller than that of BDF2. The O2CP is slightly more expensive than BDF2 (requiring one more matrix-vector product per step), but it trades off well between fast convergence and high efficiency.

\begin{figure}[hbt!]
\begin{minipage}[t]{0.33\textwidth}
\includegraphics[width=\textwidth,trim={0.1cm 0.2cm 1cm 1cm},clip]{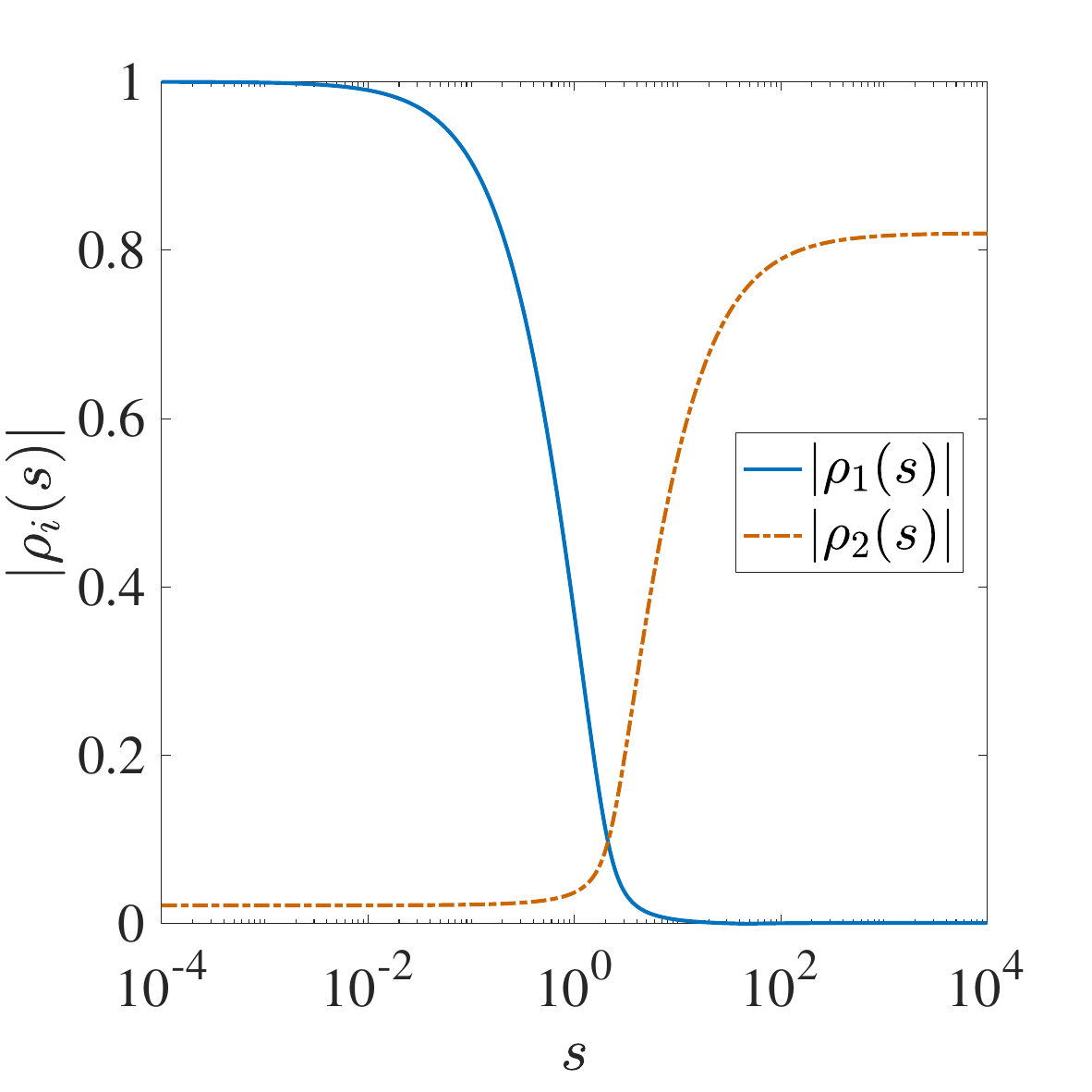}

\centering
(a)
\end{minipage}\hfill
\begin{minipage}[t]{0.33\textwidth}
\includegraphics[width=\textwidth,trim={0.1cm 0.2cm 1cm 1cm},clip]{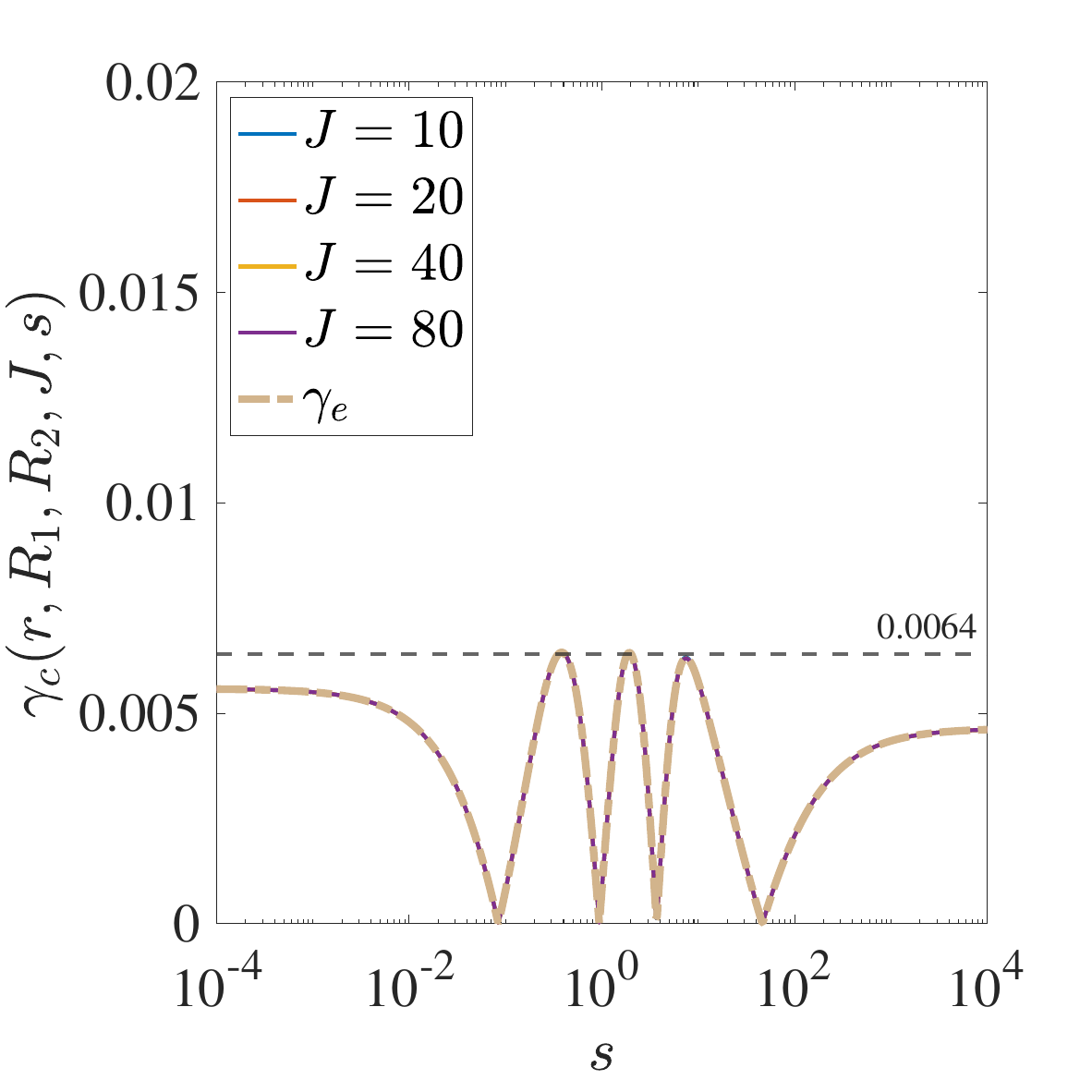}

\centering
(b)
\end{minipage}\hfill
\begin{minipage}[t]{0.33\textwidth}
\includegraphics[width=\textwidth,trim={0.1cm 0.2cm 1cm 1cm},clip]{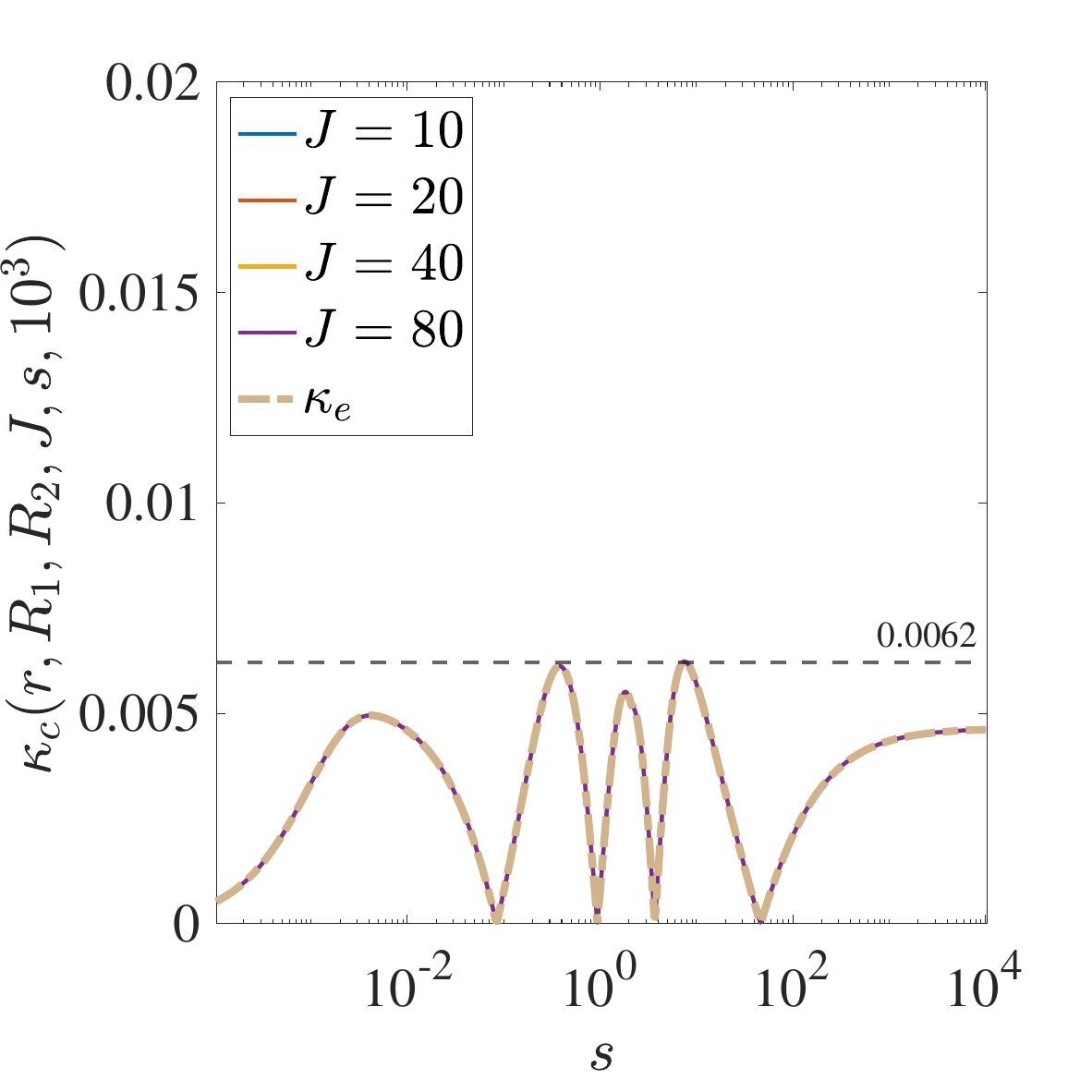}

\centering
(c)
\end{minipage}
    \caption{With the O2CP~\eqref{eqn:TS OCP_E} as the CP and three-stage Radau IIA as the FP: (a) $|\rho_i (s)|~i=1,2$, (b) $\gamma_c(r,R_1,R_2,J,s)$ for $J\in \{10,20,40,80\}$ and $\gamma_e$ in~\eqref{eqn:g_e}, and (c) $\kappa_c(r,R_1,R_2,J,s,10^3)$ for $J\in \{10,20,40,80\}$ and $\kappa_e$ in~\eqref{eqn:k_e}.\label{fig:rho_OCP}}
\end{figure}

Algorithm~\ref{algo:M-OCP} adapts the CP to the FP and the coarsening factor $J$ so as to achieve fast convergence. We evaluate the O2CP using two-stage Radau IIA and three-stage Lobatto IIIC as FPs in Fig.~\ref{fig:rho_Radau_LIIIC} (a)-(b). The behavior of $\gamma_e$ is similar to that for three-stage Radau IIA in Fig.~\ref{fig:rho_OCP}. Fig.~\ref{fig:rho_Radau_LIIIC} (c) shows Theorem~\ref{thm:reduced conv.} for the three FPs.

\begin{figure}[hbt!]
\begin{minipage}[t]{0.33\textwidth}
\includegraphics[width=\textwidth,trim={0.1cm 0.2cm 1cm 1cm},clip]{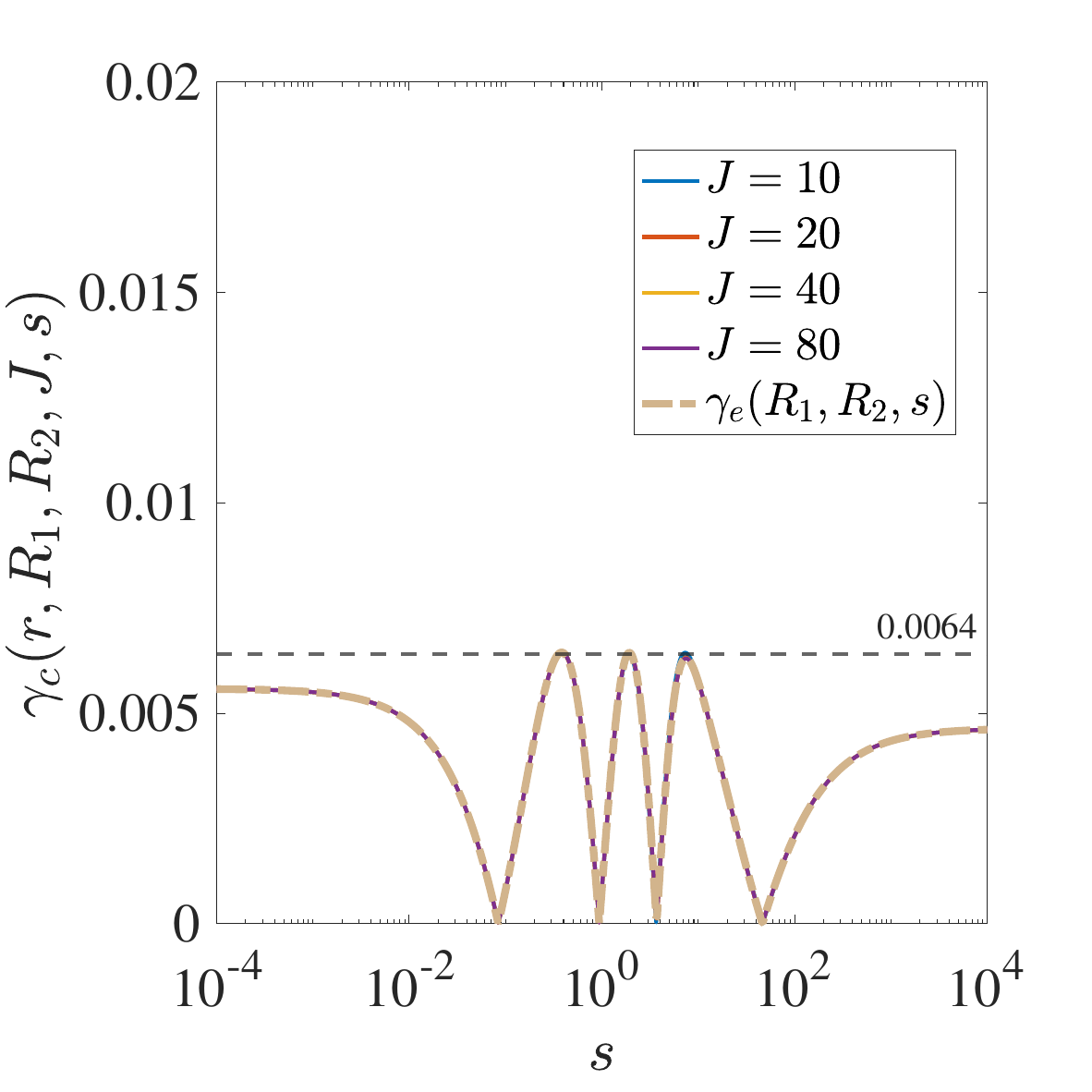}

\centering
(a)
\end{minipage}\hfill
\begin{minipage}[t]{0.33\textwidth}
\includegraphics[width=\textwidth,trim={0.1cm 0.2cm 1cm 1cm},clip]{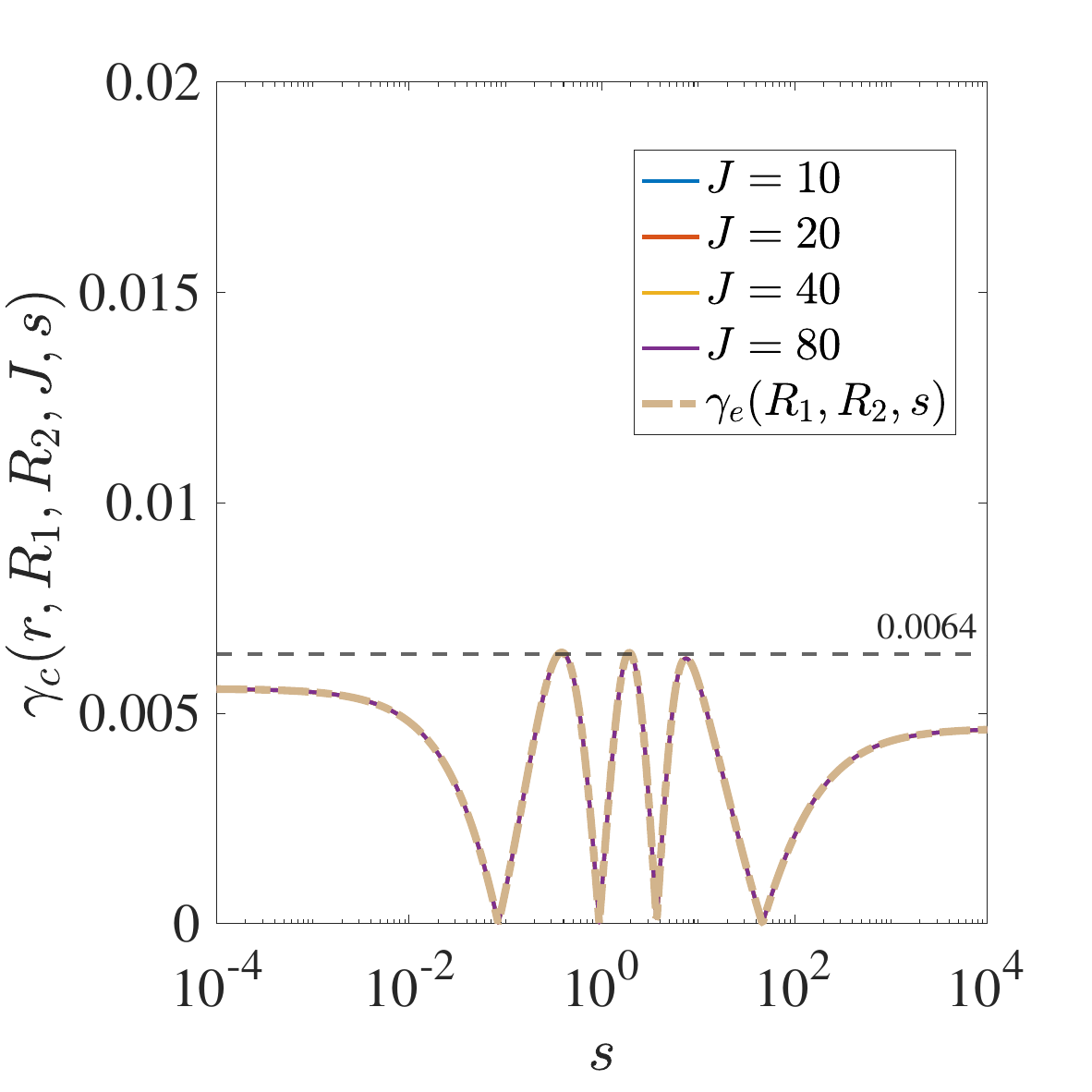}

\centering
(b)
\end{minipage}\hfill
\begin{minipage}[t]{0.33\textwidth}
\includegraphics[width=\textwidth,trim={0.1cm 0.2cm 1cm 1cm},clip]{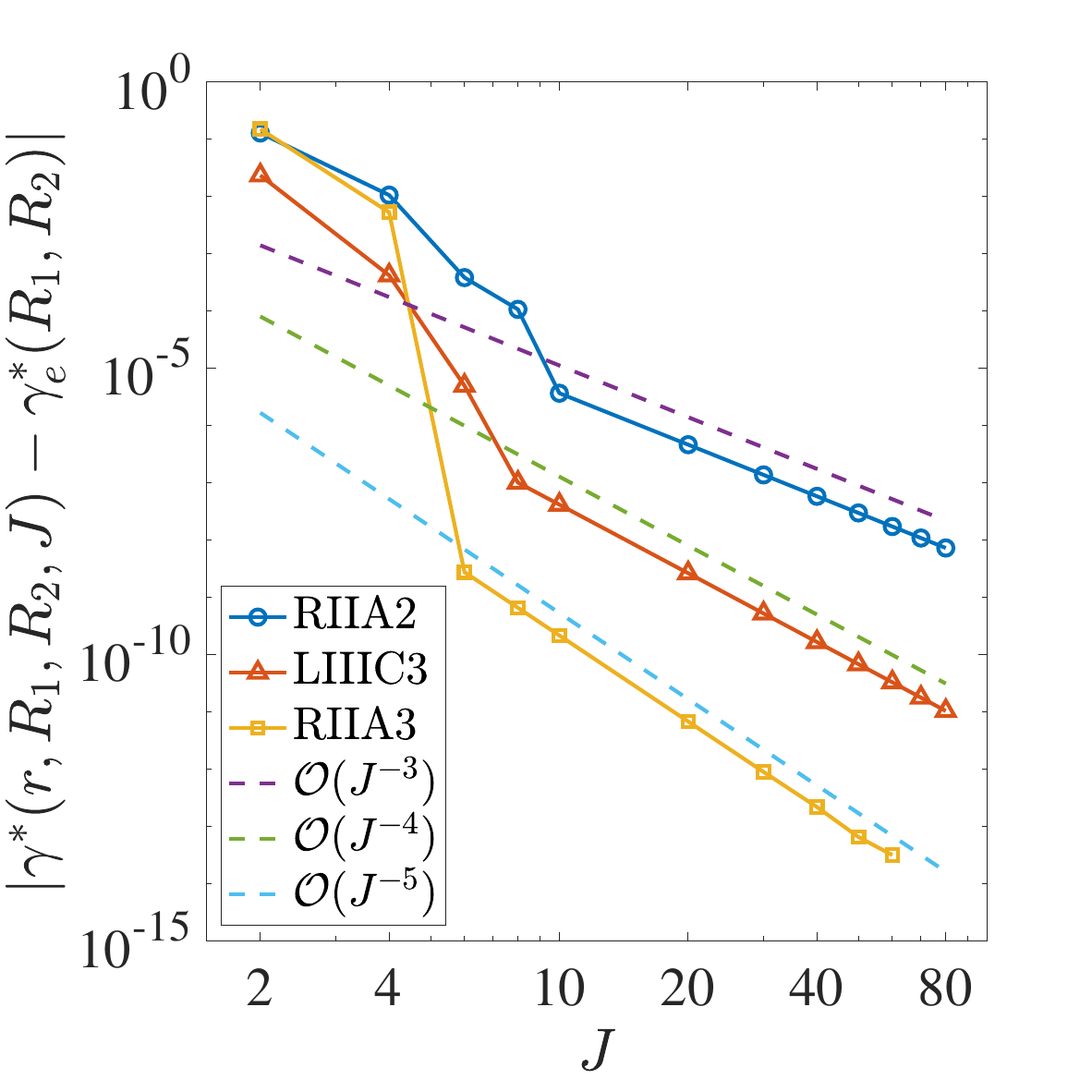}

\centering
(c)
\end{minipage}
\caption{$\gamma_c(r,R_1,R_2,J,s)$ for $J\in \{10,20,40,80\}$ with the O2CP for two-stage Radau IIA (a) and three-stage Lobatto IIC (b) as the FP; (c) illustration of the bound on $|\gamma^*(r,R_1,R_2,J)-\gamma_e^*(R_1,R_2)|$ in Theorem~\ref{thm:reduced conv.} for three FPs: two-stage Radau IIA (RIIA2), three-stage Lobatto IIIC (LIIIC3), and three-stage Radau IIA (RIIA3).}
    \label{fig:rho_Radau_LIIIC}
\end{figure}

\subsection{Properties of the O2CP}
Now we examine the stability, order condition and the convergence function $\gamma_e$ of the O2CP. First, it is A-stable and consistent.
\begin{proposition}
The O2CP is A-stable and consistent. 
\end{proposition}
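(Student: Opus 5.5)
The plan is to rewrite the O2CP \eqref{eqn:TS OCP_E} as a linear two-step scheme \eqref{eqn:k-step} and then check consistency and A-stability on its characteristic polynomials. Multiplying $z^2 - R_2(s,\hat\bt)z - R_1(s,\hat\bt)=0$ by $1+0.56380s$ and comparing with \eqref{eqn:R_a_b} gives
\begin{equation*}
\alpha_2=1,\quad \alpha_1=-0.97822,\quad \alpha_0=-0.02178,\qquad \beta_2=0.56380,\quad \beta_1=0.46300,\quad \beta_0=0.00047 .
\end{equation*}
Consistency, in the sense used in \eqref{eqn:M-OCP}, is the first condition of \eqref{eqn:order_k_step}. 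It holds because $\alpha_0+\alpha_1+\alpha_2=-0.02178-0.97822+1=0$; this is built into the parametrization, since the constant terms of $R_1$ and $R_2$ sum to one. I would also record the first moment condition $\alpha_1+2\alpha_2$ against $\beta_0+\beta_1+\beta_2$. These are close but not equal, because the optimization did not impose order conditions. I would therefore state consistency in the paper's sense, namely $\varrho(1)=0$ with $\varrho(\zeta)=\alpha_2\zeta^2+\alpha_1\zeta+\alpha_0$. I would also note that $\zeta=1$ is a simple root of $\varrho$, since the other root is $-0.02178$, so root-condition zero-stability holds.

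For A-stability, for the test problem $u'=-\lambda u$ with $\mathrm{Re}\,s\ge0$, $s=\tau\lambda$, I need both roots of
\begin{equation*}
\varrho(\zeta)+s\,\sigma(\zeta)=0,\qquad \sigma(\zeta)=\beta_2\zeta^2+\beta_1\zeta+\beta_0,
\end{equation*}
to lie in the closed unit disk. I would use the boundary-locus argument. On $|\zeta|=1$ put $s(\zeta)=-\varrho(\zeta)/\sigma(\zeta)$. It suffices to show three things:
\begin{itemize}
\item[(a)] $\mathrm{Re}\,s(\zeta)\le 0$ on the unit circle, which is equivalent to $\mathrm{Re}\big(\varrho(\zeta)\overline{\sigma(\zeta)}\big)\ge 0$ for $\zeta=e^{i\theta}$;
\item[(b)] $\sigma$ has no zeros in $|\zeta|\ge1$;
\item[(c)] for one interior point, e.g.\ $s\to+\infty$ or real $s>0$, the roots lie inside the disk.
\end{itemize}
Point (c) is already supplied by the earlier numerical verification that $|\rho_1(s)|,|\rho_2(s)|<1$ on $\mathbb{R}_+$. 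As $s\to\infty$ the roots tend to those of $\sigma$, namely $\zeta\approx -0.001$ and $\zeta\approx-0.82$, both inside the disk, which also settles (b). Then, by continuity of the roots in $s$, no root can cross $|\zeta|=1$ while $\mathrm{Re}\,s>0$, which yields A-stability.

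The main computational step is (a). With $\zeta=e^{i\theta}$ and $x=\cos\theta$, the quantity $\mathrm{Re}(\varrho\bar\sigma)$ reduces to
\begin{equation*}
\textstyle\sum_{i,j}\alpha_i\beta_j\cos((i-j)\theta),
\end{equation*}
which is a quadratic polynomial $g(x)$ in $x\in[-1,1]$. It must vanish at $x=1$, because $\varrho(1)=0$. Factoring gives $g(x)=(1-x)\,h(x)$ with $h$ linear. A-stability then reduces to checking $h(\pm1)\ge0$ from the explicit coefficients.

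I expect this sign check to be the main obstacle. It is a finite arithmetic verification, but the margins may be small, particularly near $x=1$ because the first-order condition is only approximately satisfied, and near $x=-1$ because $\beta_1$ is close to $\beta_2$. So I would carry out the expansion exactly with the given five-digit coefficients, rather than rely on the plotted curves.
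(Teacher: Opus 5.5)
Your proposal is correct and follows essentially the same route as the paper: the boundary locus $\mu=\varrho(\zeta)/\sigma(\zeta)$ on $\zeta=e^{i\theta}$, the reduction of $\Re(\varrho\bar\sigma)$ to a quadratic in $\cos\theta$ whose sign is checked from the coefficients of \eqref{eqn:TS OCP_E} (the paper's $f(\theta)$ is your $-g$, because its $\varrho$ carries the opposite sign), and consistency from $\sum_i\alpha_i=0$. Your factorization $g(x)=(1-x)h(x)$ and the explicit checks (b)--(c) make rigorous what the paper compresses into ``substituting the coefficients'' and ``the mappings are conformal,'' and the margins you were worried about are in fact comfortable: $h(x)\approx 0.1227+0.0236\,x$, so $h(-1)\approx 0.099$ and $h(1)\approx 0.146$.
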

\begin{proof}
For the A-stability, applying the O2CP to $y'+\lambda y=0$ yields
    \begin{equation*}
        y_{n+2}+\alpha_{0} y_{n}+\alpha_{1} y_{n+1}=-\Delta T \lambda\left( \beta_{0} y_{n}+\beta_{1} y_{n+1}+\beta_{2} y_{n+2} \right),
\end{equation*}
with $\alpha_i,~\beta_i$ defined in~\eqref{eqn:R_a_b}.
Let $\mu=\Delta T \lambda$ and $y_n = \zeta^n$. Then we obtain the characteristic equation
\begin{equation*}
(1+\beta_{2} \mu)\zeta^{2} +(\alpha_{1} +\beta_{1} \mu )\zeta +\left( \alpha_{0} +\beta_{0} \mu \right) =-\varrho \left( \zeta \right) +\mu \sigma \left( \zeta \right) =0.
\end{equation*}
The root locus curve is given by
\begin{equation*}
    \mu =\frac{\varrho \left( \zeta \right)}{\sigma \left( \zeta \right)} =\frac{- \zeta^{2} -\alpha_{1} \zeta -\alpha_{0}}{\beta_{2} \zeta^{2} +\beta_{1} \zeta +\beta_{0}} ,\quad \mbox{with }\zeta =e^{i\theta}.
\end{equation*}
To show $\Re(\mu) \leq 0$, let $\varrho = \varrho_r + i \varrho_i$ and $\sigma = \sigma_r + i \sigma_i$, with the subscripts $r$ and $i$ indicating the real and imaginary parts, respectively. Then, $\Re \left( \mu \right) =\Re \left( {(\varrho_{r} \sigma_{r} +\varrho_{i} \sigma_{i})}/{|\sigma |^{2}} \right)$. Let $f\left( \theta \right) =\varrho_{r} \sigma_{r} +\varrho_{i} \sigma_{i}$. Then we have 
\begin{equation*}
\begin{aligned}
f\left( \theta \right) &=\left( -2\alpha_{0} \beta_{2} -2\beta_{0} \right) \cos^{2} \theta -\left( \alpha_{0} \beta_{1} +\alpha_{1} \beta_{0} +\alpha_{1} \beta_{2} +\beta_{1} \right) \cos \theta \\
&\quad +\left( \alpha_{0} \beta_{2} +\beta_{0} -\alpha_{0} \beta_{0} -\alpha_{1} \beta_{1} -\beta_{2} \right) .
    \end{aligned}
\end{equation*}
Substituting the coefficients from~\eqref{eqn:TS OCP_E} confirms that $f(\theta) \leq 0$ for $\theta \in (0, 2\pi)$. Since the mappings are conformal and the positive real axis belongs to the stability region, the right half complex plane $\mathbb{C}_+$ is contained in the stability region.
The consistency follows from the order conditions in~\eqref{eqn:order_k_step}.
\end{proof}

The stability regions of BDF2 and  O2CP are shown in Fig.~\ref{fig:stability}. Both are A-stable, but BDF2 has a larger stability region. The O2CP does not satisfy the first-order condition in~\eqref{eqn:order_k_step}, but BDF2 is of second order. Fig.~\ref{fig:contour} presents $\gamma_e$ for two solvers: $\gamma_e$ for the O2CP is much smaller than that of BDF2, particularly when $\Im(s)$ is small. For both methods, $\gamma_e$ exceeds $1$ when $\Re(s) = 0$. Fig.~\ref{fig:contour_new} shows $\kappa_e$ in~\eqref{eqn:k_e} for two solvers. Indeed, $\kappa_e$ provides a sharper estimate than $\gamma_e$, cf. Remark~\ref{rmk:sharper}.

\begin{figure}[htbp!]
    \centering
\includegraphics[width=0.98\textwidth,trim={0cm 1cm 0cm 1.5cm},clip]{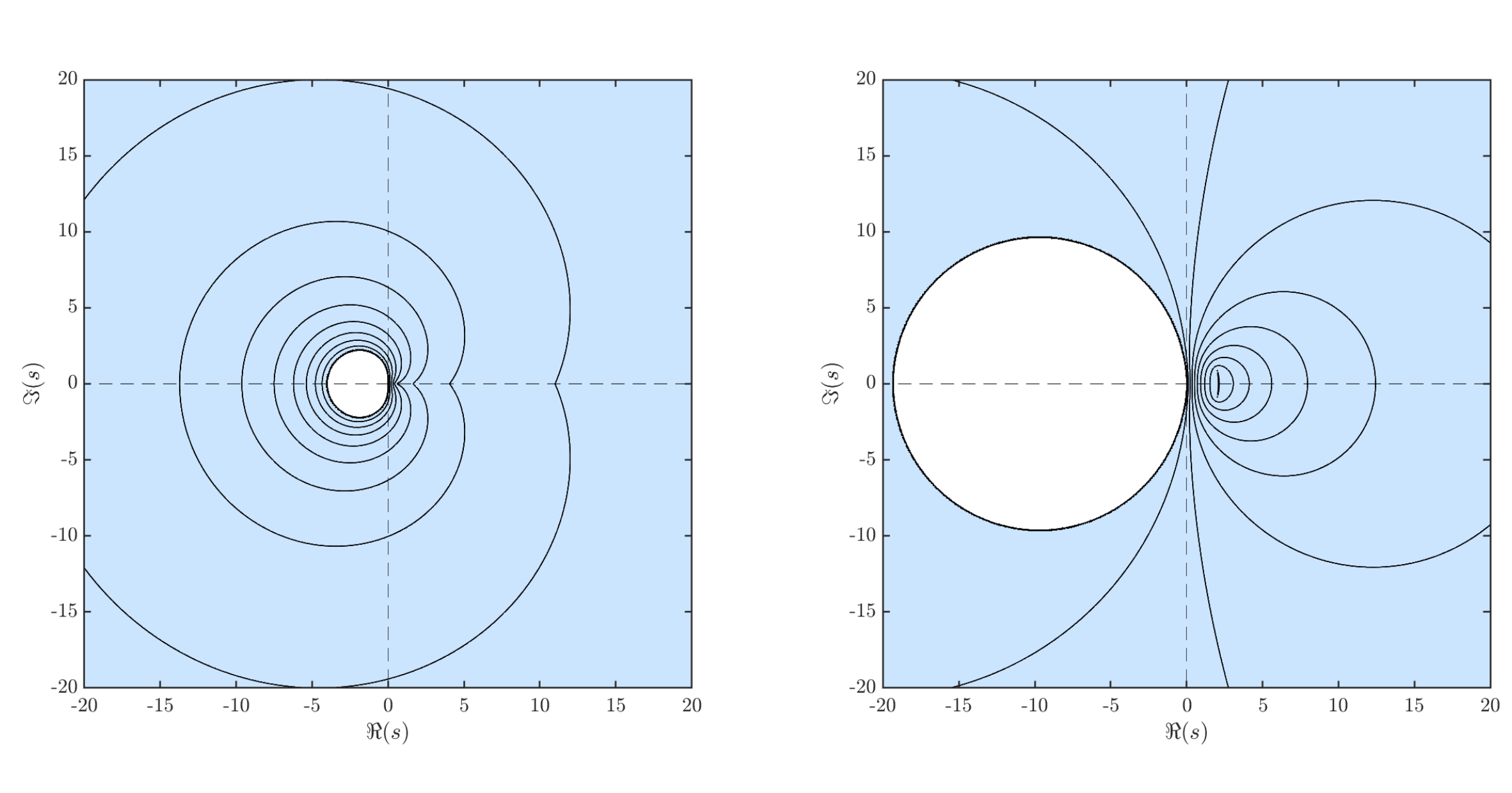}
    
    \begin{minipage}[t]{0.48\textwidth}
        \centering
        (a)
    \end{minipage}\hfill
    \begin{minipage}[t]{0.48\textwidth}
        \centering
        (b)
    \end{minipage}

    \caption{The stability region (blue) of (a) the BDF2 and (b) the O2CP. }
    \label{fig:stability}
\end{figure}

\begin{figure}[htbp!]
    \centering
\includegraphics[width=0.98\textwidth,trim={0cm 1cm 0cm 1.5cm},clip]{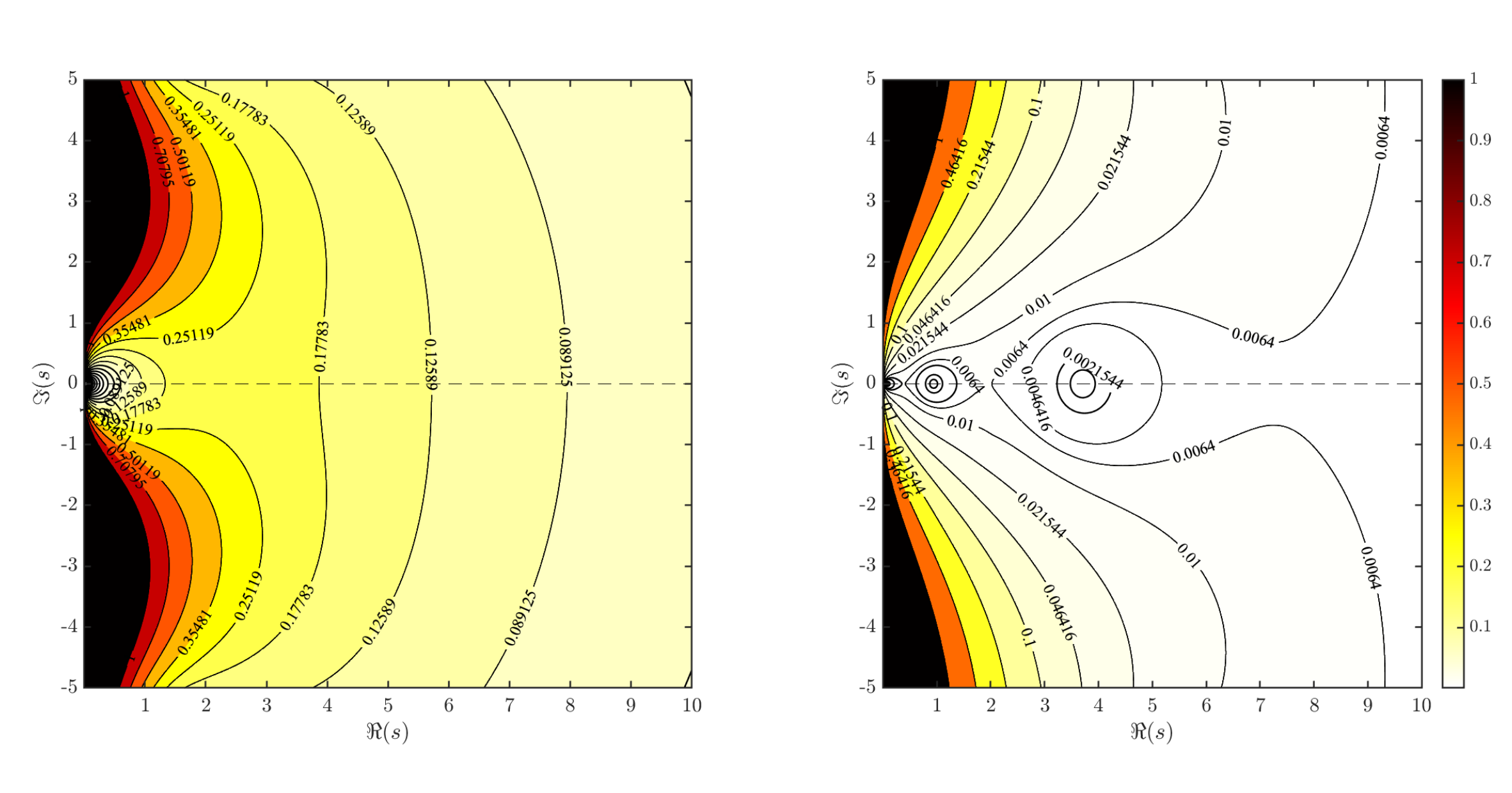}
    
    \begin{minipage}[t]{0.48\textwidth}
        \centering
        (a)
    \end{minipage}\hfill
    \begin{minipage}[t]{0.48\textwidth}
        \centering
        (b)
    \end{minipage}

    \caption{The contour maps of $\gamma_e (R_1,R_2,s)$ for (a) the BDF2 and (b) the O2CP. }
    \label{fig:contour}
\end{figure}

\begin{figure}[htbp!]
    \centering
\includegraphics[width=0.98\textwidth,trim={0cm 1cm 0cm 1.5cm},clip]{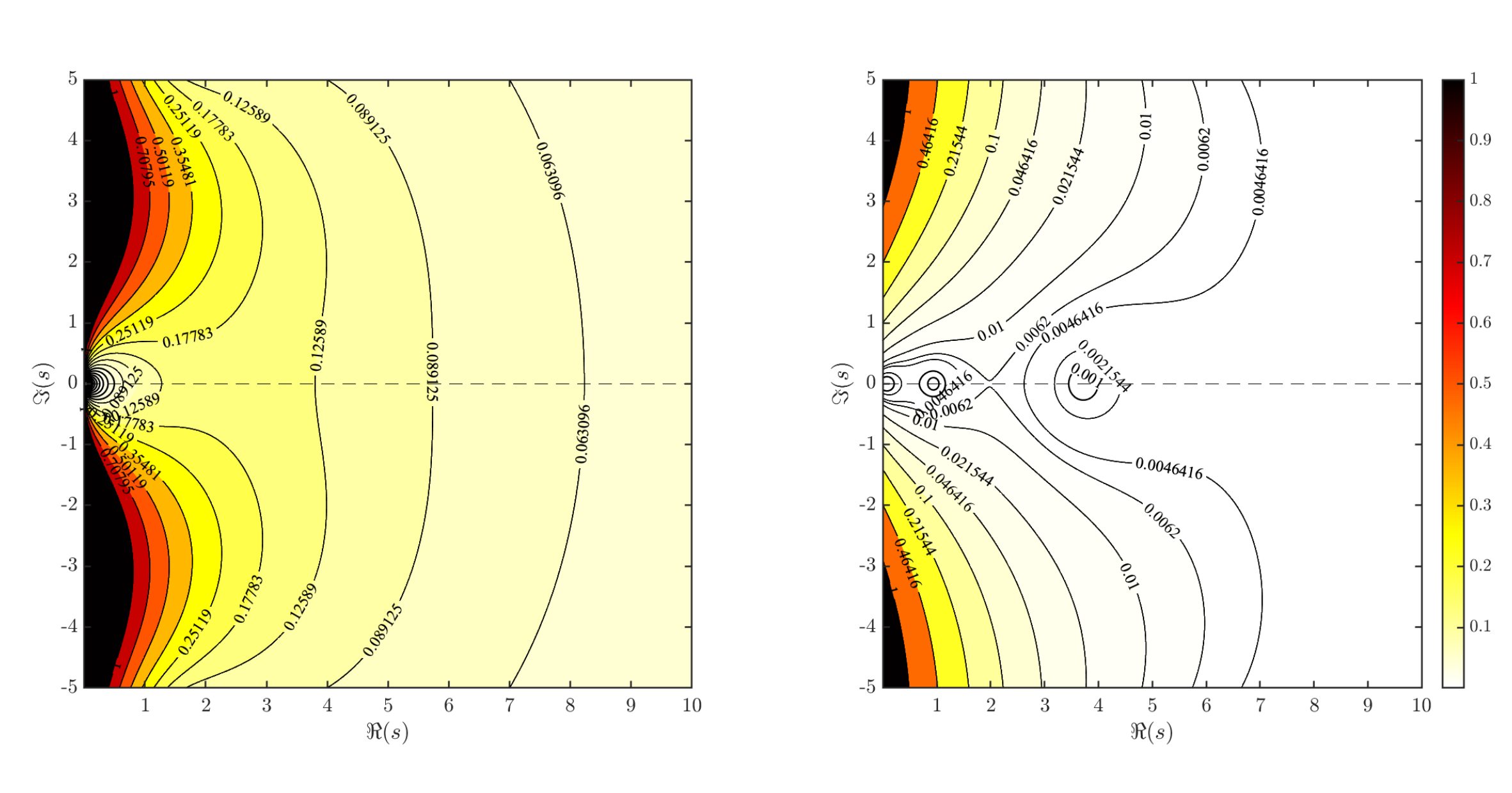}
    
    \begin{minipage}[t]{0.48\textwidth}
        \centering
        (a)
    \end{minipage}\hfill
    \begin{minipage}[t]{0.48\textwidth}
        \centering
        (b)
    \end{minipage}

    \caption{ $\kappa_e (R_1,R_2,s,10^3)$ for (a) the BDF2 and (b) the O2CP. }
    \label{fig:contour_new}
\end{figure}

\section{Numerical experiments and discussions}\label{sec:num}

\subsection{Test on the linear model}\label{sec:linear}
Consider the following initial and boundary value problem on the domain $\Omega=(0,1)$:
\begin{equation}\label{eqn:diffusion}
\partial_t u(x,t) - \partial_{xx} u(x,t) = f(x,t),\quad  (x,t) \in \Omega\times (0,T), 
\end{equation}
with $f(x,t) = \sin(\pi x)(-\pi\sin(\pi t)+\pi^2 \sin (\pi x) \cos(\pi t))$, a zero Dirichlet boundary condition and initial condition $u_0$, and the following three sets of problem data: {\rm(i)}  $T=10$, $u_0 = \chi_{(0,1/2)},$ with 
$\chi_{(0,1/2)}$ being the characteristic function of the set $(0,1/2)$; {\rm(ii)} $T=10$, $u_0(x) = \sin(\pi x)$ and $u(x,t)=\sin (\pi x) \cos(\pi t);$ {\rm(iii)} $T=1$, $u_0(x) = \sin(\pi x)$ and $u(x,t)=\sin (\pi x) \cos(\pi t).$

In the experiment, we divide the domain $\Omega$ into $1000$ equal subintervals, each of length $h=1/1000$, and employ the Galerkin FEM with linear elements. We initialize $U_{n}^{0}$ with random values uniformly drawn from $[0,1]$. Fix the three-stage Lobatto IIIC in~\eqref{eqn:RIIA3} as the FP with a time step size $\Delta t  = 0.01$, and take coarsening factors $J=20, 50$. We measure the error $e=\max_{1\leq n\leq N_{c}}\|U_{n}^{k}-U_n\|_{L^{2}\II} $ between the parareal solution $U^k_n$ and the fine solution $U_n$. Consider four CPs: BDF2 in~\eqref{eqn:BDF2}, O2CP in~\eqref{eqn:TS OCP_E}, OCP~\cite{jin2025optimizing}, and SDIRK2. The stability functions of OCP and SDIRK2 are given by
\begin{equation*} 
\begin{aligned}
    \text{(OCP): } R(s) &= \frac{{1- 0.21014s + 0.00486s^2}}{{1 + 0.78986s + 0.38283s^2}},\\
     \mbox{(SDIRK2): }  R(s) & = \dfrac{(2\gamma - 1) s + 1}{(\gamma s + 1)^{2}}, \quad \mbox{with }\gamma = \frac{2-\sqrt{2}}{2}.
\end{aligned}
\end{equation*}
The OCP and SDIRK2 are CPs in Algorithm~\ref{alg:para}, and the O2CP and BDF2 are CPs in Algorithm~\ref{alg:two-step parareal}. The numerical results are shown in Fig.~\ref{fig:Ex1}. 

In case (i), the initial data $u_0$ is nonsmooth. With $\gamma_\ell\approx0.014$ \cite{jin2025optimizing}, the parareal method with the OCP converges much faster than that with SDIRK2. Theorem~\ref{thm:error estimation} and Remark~\ref{rmk:sharper} ensure that the two-step parareal method with the O2CP and BDF2 as CPs achieves a convergence factor at least around $0.0062$ and $0.151$, respectively, see Figs.~\ref{fig:rho_BDF2} and \ref{fig:rho_OCP}. The top row of Fig.~\ref{fig:Ex1} aligns with the estimate. Note that these two CPs are implemented with one half of the coarse time step size. 

Table~\ref{tab:Ex1} compares the parareal methods using the four CPs. The convergence factor is measured for case (i) with $J=50$, cf. {Fig.~\ref{fig:Ex1} (b)}. The estimate $\kappa_e^*$ closely matches the empirical one and thus is sharp. The two-step parareal algorithm with the O2CP exhibits the smallest convergence factor and thus first achieves a maximum $L^2(\Omega)$ parareal error below $10^{-9}$, one fewer iteration than that with the OCP. Its cost is much lower than that of the OCP and only slightly higher than BDF2.  The speed-up compares the sequential FP to the parareal algorithm and is evaluated when the parareal error falls below $10^{-9}$. It is defined by 
\begin{equation}\label{eqn:speed-up}
\textbf{S}_{\text{para/seq}} = \textbf{cost}_{\text{seq}}  / (\textbf{Iter}\times(\textbf{cost}_{\text{CP}}+\textbf{cost}_{\text{FP}})),
\end{equation}
where $\textbf{cost}_{\text{CP}}$ is the average sequential cost of the CP over $[0,T]$ and $\textbf{cost}_{\text{FP}}$ is the average cost of the FP within each coarse interval. Here we have ignored the communication. The speed-up of the O2CP is the largest, indicating that Algorithm \ref{alg:two-step parareal} with the {O2CP} is effective for solving linear parabolic problems. 

In case (ii), the initial data $u_0$ is smooth. The convergence is similar to  case (i). Fig.~\ref{fig:Ex1} indicates that both single-step and two-step parareal algorithms with $J=50$ perform worse than that with $J=20$, and approach the estimate.

In case (iii), we verify the finite iteration convergence property in Theorem~\ref{thm:finite conv.}. With a fine step size $\tau = 0.01$ and the coarsening factors $J=20$ and $50$. Theorem~\ref{thm:finite conv.} predicts that the single-step and two-step parareal algorithms attain convergence at the fifth and second iterations, respectively. This is clearly observed in Fig.~\ref{fig:Ex1_finite} (zero values have been replaced with $10^{-12}$).

\begin{figure}[htbp]
\centering
\begin{minipage}[b]{0.49\textwidth}
\centering
\includegraphics[width=\textwidth,trim={0.cm 0.2cm 0.5cm 1.3cm},clip]{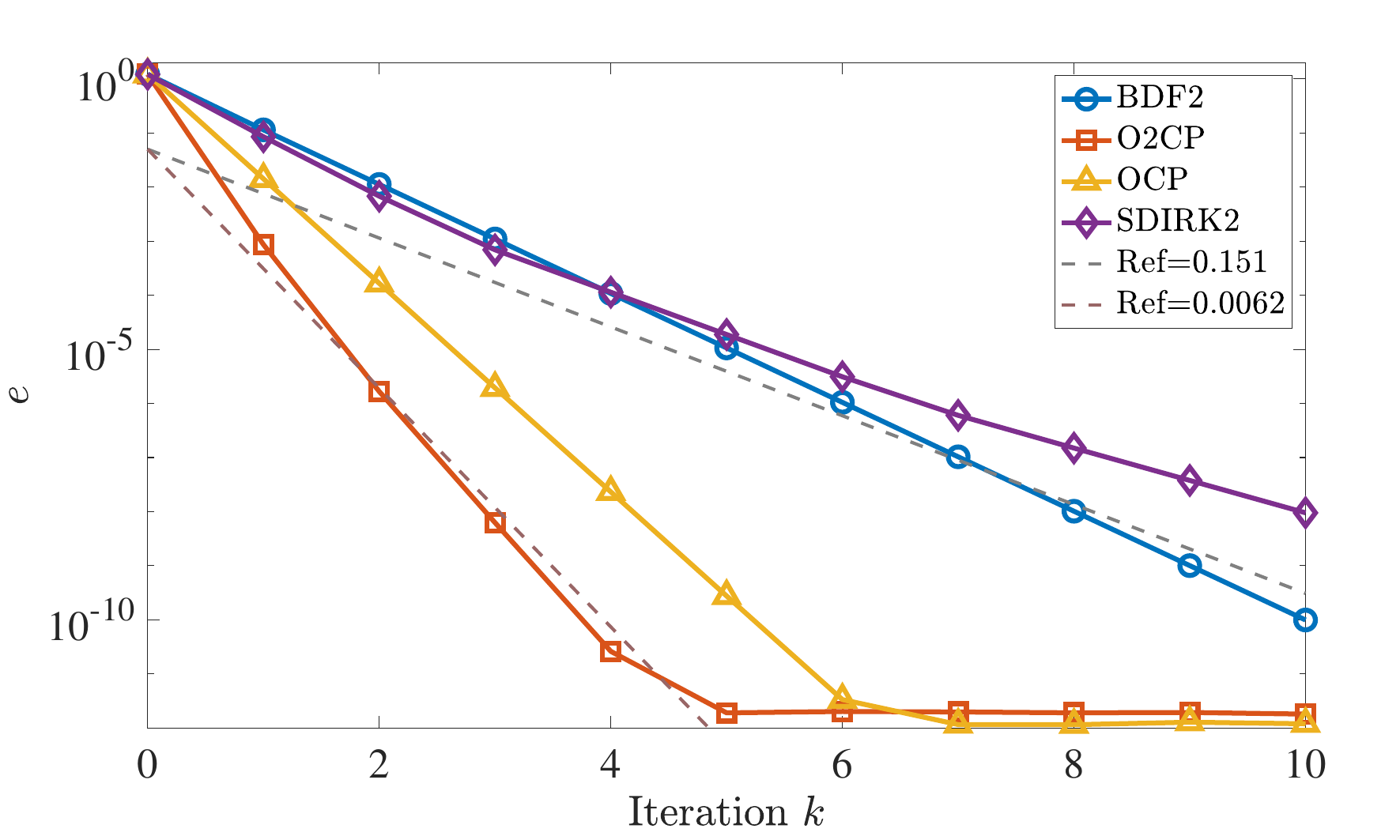}
\caption*{(a) (i), $J=20$.}
\end{minipage}
\hfill
\begin{minipage}[b]{0.49\textwidth}
\centering
\includegraphics[width=\textwidth,trim={0.cm 0.2cm 0.5cm 1.3cm},clip]{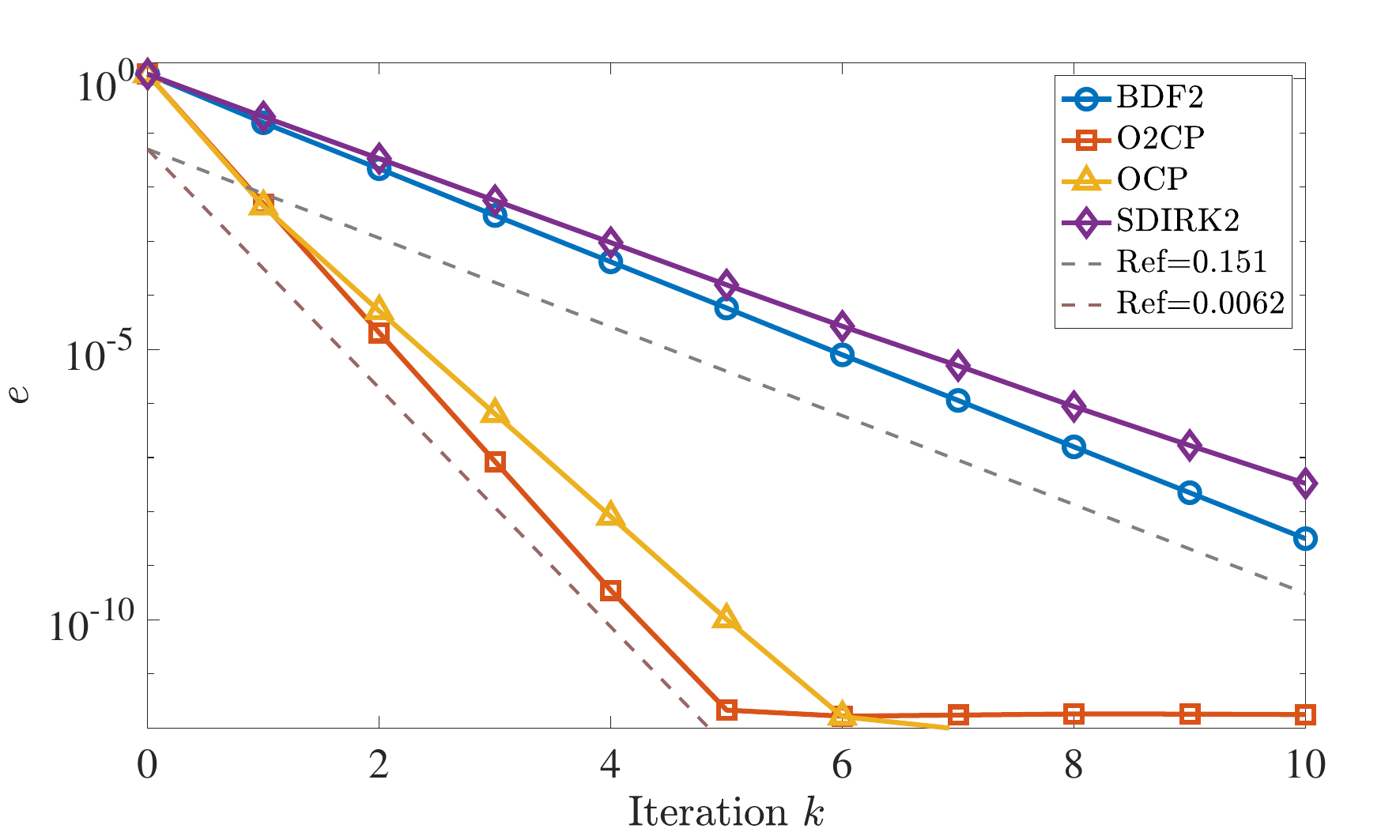}
\caption*{(b) (i), $J=50$.}
\end{minipage}
\begin{minipage}[b]{0.49\textwidth}
\centering
\includegraphics[width=\textwidth,trim={0.cm 0.2cm 0.5cm 1.3cm},clip]{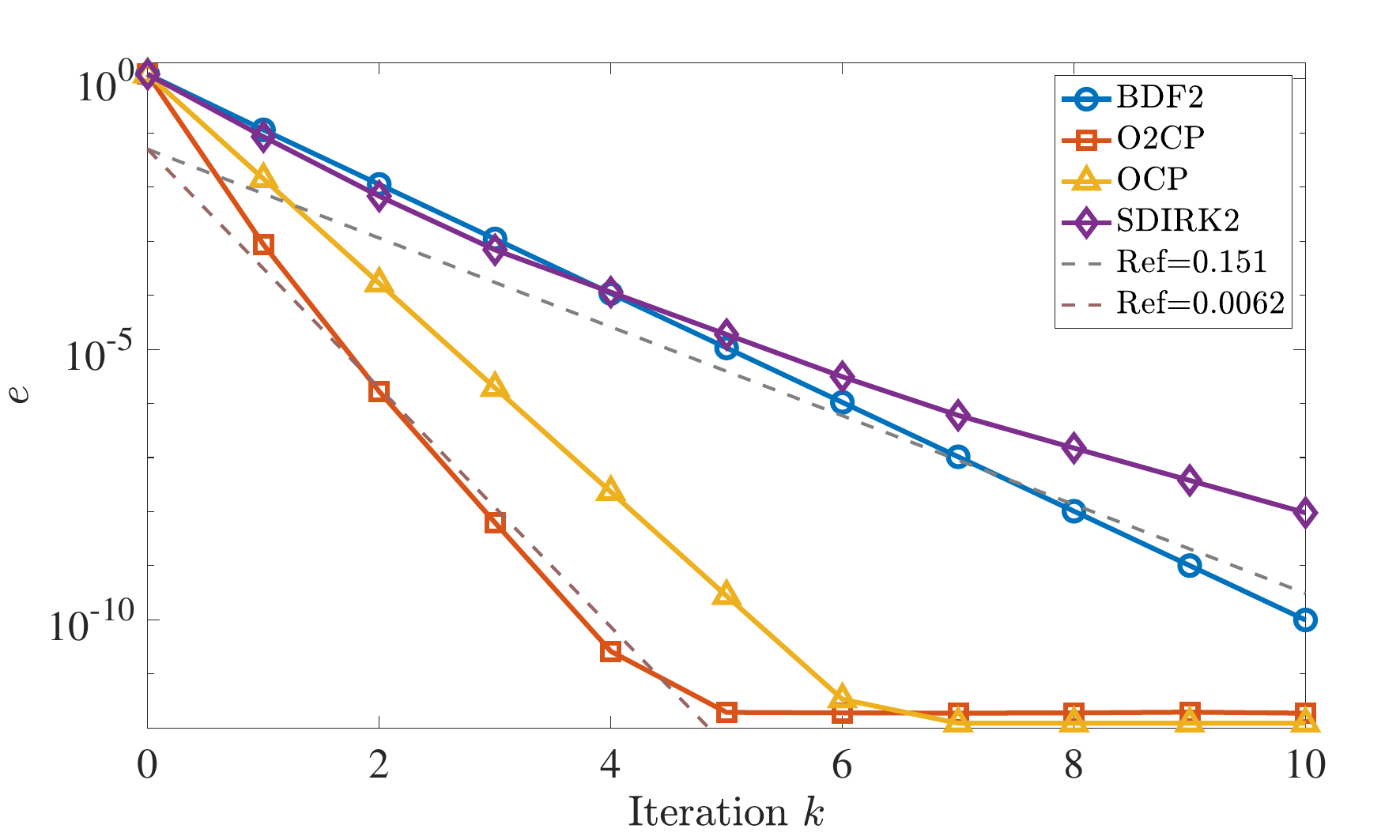}
\caption*{(c) (ii), $J=20$.}
\end{minipage}
\hfill
\begin{minipage}[b]{0.49\textwidth}
\centering
\includegraphics[width=\textwidth,trim={0.cm 0.2cm 0.5cm 1.3cm},clip]{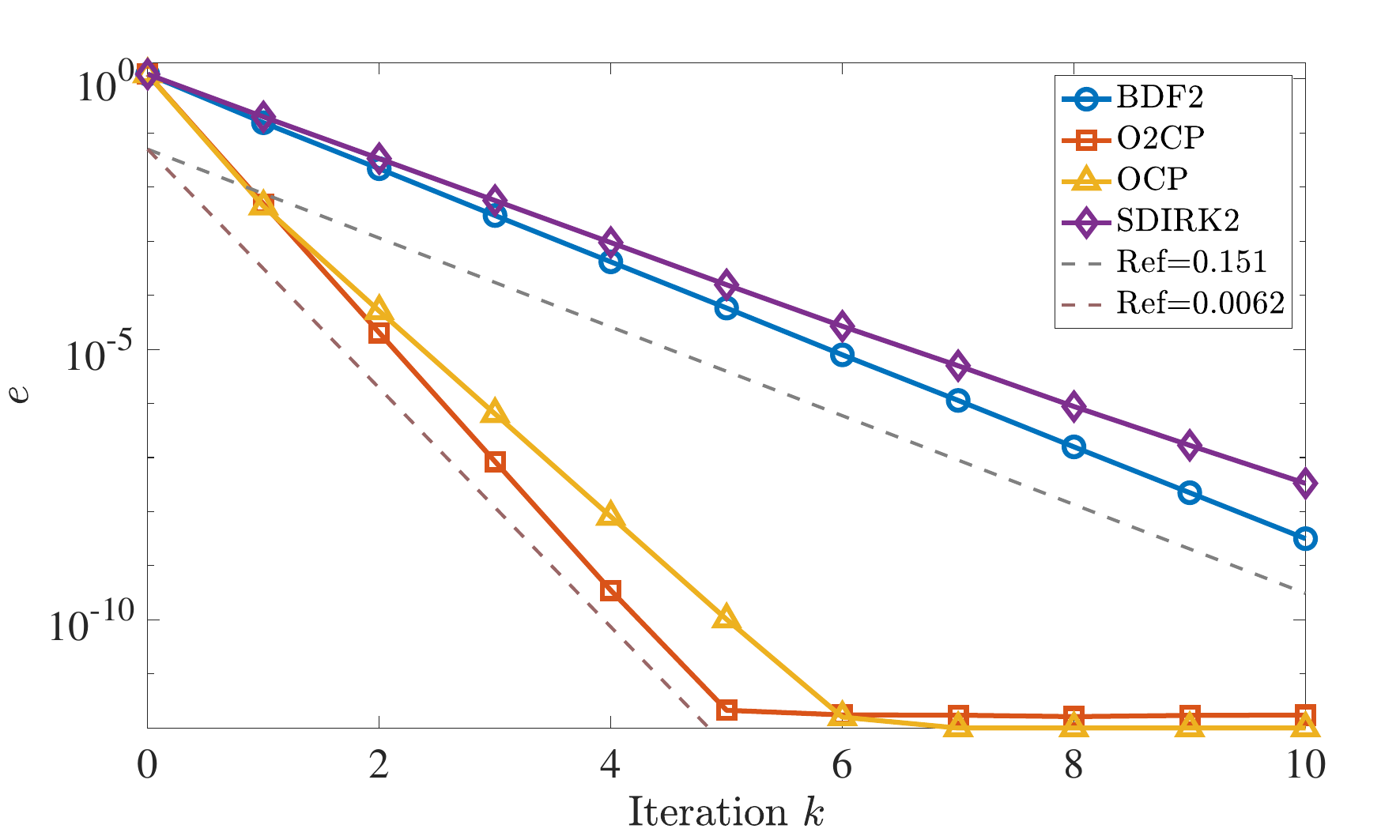}
\caption*{(d) (ii), $J=50$.}
\end{minipage}
    \caption{
    The $L^2$ error $e$ for four CPs versus the iteration $k$ for problem~\eqref{eqn:diffusion}.}
    \label{fig:Ex1}
\label{fig:Ex1_new}
\end{figure}

\begin{table}[htbp!]
\centering
\caption{Comparison of four CPs for problem \eqref{eqn:diffusion} (i): the reduced convergence factor $\gamma_e^*$ in \eqref{eqn:reduced conv.}, the factor $\kappa_e^*$ in \eqref{eqn:reduced conv.}, the empirical convergence factor~$\widehat{\gamma}_e$ with $J=50$, the cost (in seconds) of the CPs per iteration, the number of iterations required by each CP to achieve a $L^2$ error $e$ below $10^{-9}$,  and the speed-up $\mathbf{S}_{\text{para/seq}}$ defined in~\eqref{eqn:speed-up}}. 
\begin{tabular}{@{} l S[table-format=1.2] S[table-format=1.2] S[table-format=1.3] S[table-format=1.4] @{}}
\toprule
 & {SDIRK2} & {BDF2}  & {OCP} & {{O2CP}} \\
\midrule
$\gamma_e^*$ & 0.26 & 0.22 & 0.014 & \textbf{0.0064} \\
$\kappa_e^*$ & {---} & 0.15  & {---} & \textbf{0.0062} \\
$\widehat{\gamma}_e$ & 0.18 & 0.14  & 0.011 & \textbf{0.0045} \\
Cost & 0.0018 & 0.0022 & 0.0077 & \textbf{0.0034} \\
Iter. & {13} & {11}  & {5} & \textbf{4} \\
$\mathbf{S}_{\text{para/seq}}$ & {1.52} & {1.80}  & {3.84} & \textbf{4.91} \\
\bottomrule
\end{tabular}

\label{tab:Ex1}
\end{table}

\begin{figure}[htbp]
\centering
\begin{minipage}[b]{0.49\textwidth}
\centering
\includegraphics[width=\textwidth,trim={0.cm 0.2cm 0.5cm 1.3cm},clip]{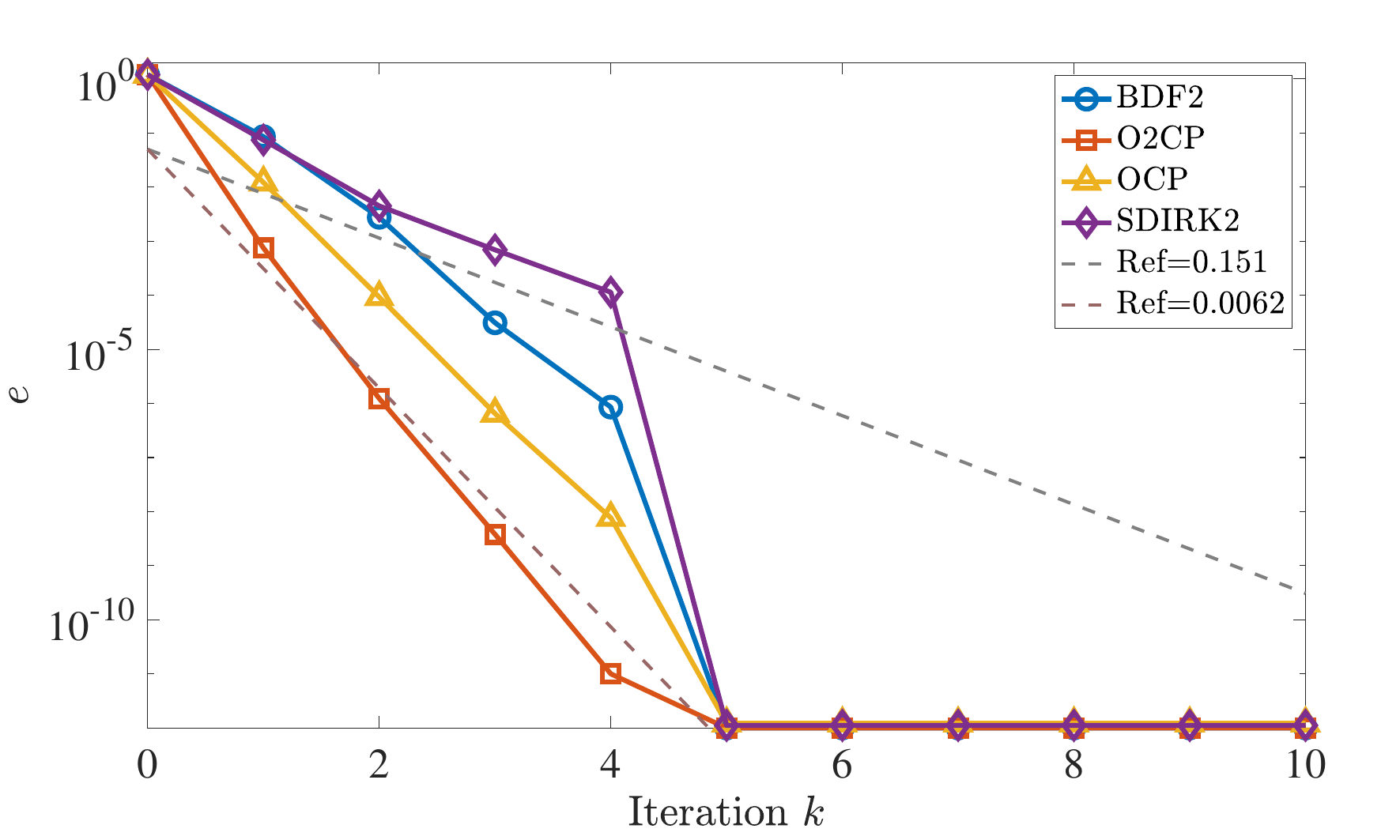}
\caption*{(a) (iii), $J=20$.}
\end{minipage}
\hfill
\begin{minipage}[b]{0.49\textwidth}
\centering
\includegraphics[width=\textwidth,trim={0.cm 0.2cm 0.5cm 1.3cm},clip]{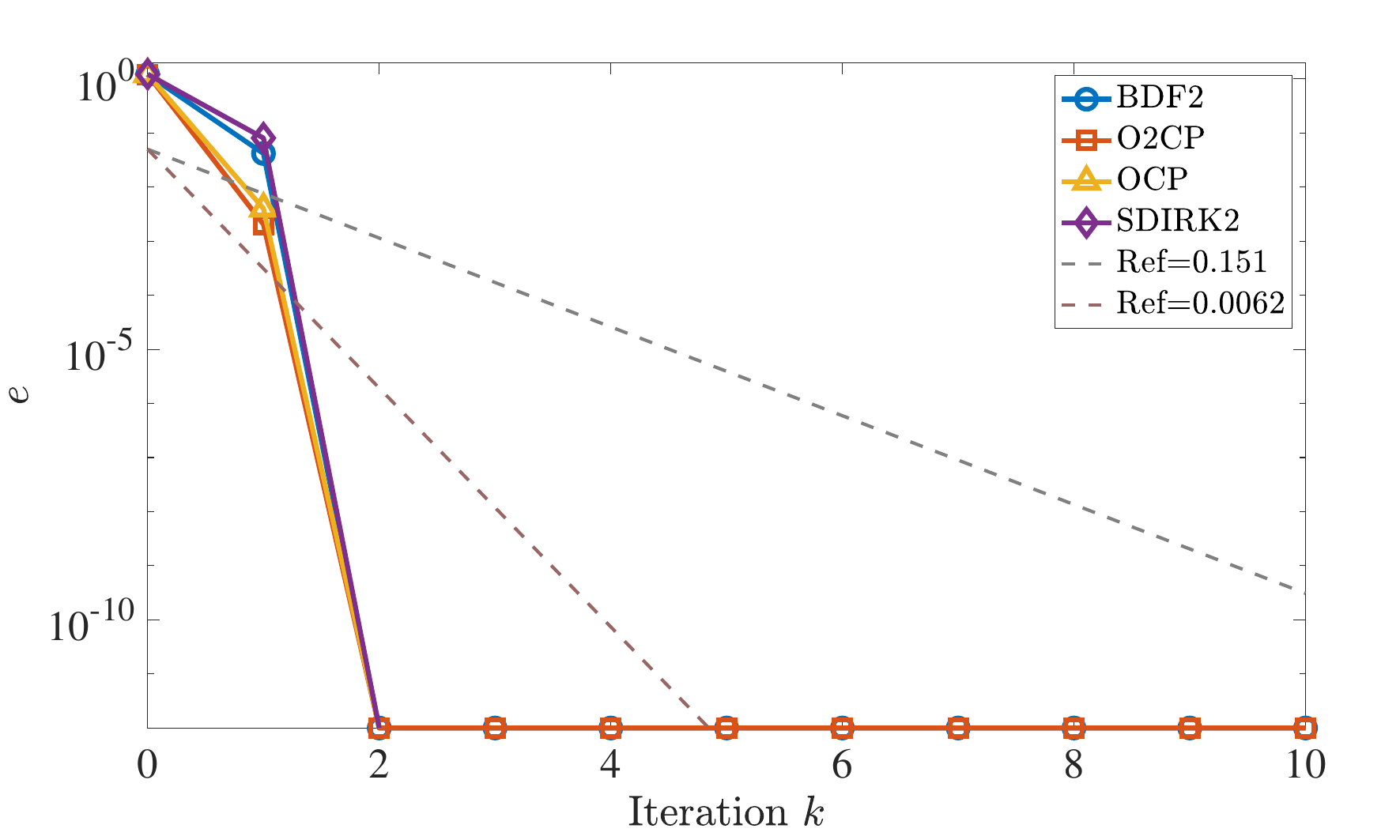}
\caption*{(b) (iii), $J=50$.}
\end{minipage}
\caption{The $L^2$ error for four CPs versus the iteration $k$ problem~\eqref{eqn:diffusion}~(c).}
    \label{fig:Ex1_finite}
\end{figure}

\subsection{Test on the nonlinear model}
Let $\Omega = (0,1)$ and $T=10$. Consider the following semilinear parabolic equation:
\begin{equation}\label{eqn:nonlinear}
\partial_{t} u=\partial_{xx} u+c_L u\left( 1-u^2 \right) + g,
\end{equation}
with a zero Dirichlet boundary condition,  
with the source $g$ chosen such that the solution $u$ is $u(x, t) = \sin(\pi x) \cos (\pi t)$. 

In the experiment, we divide the domain $\Omega$ into $1000$ equal subintervals, each of length $h=1/1000$, and employ the Galerkin FEM with linear elements. We initialize $U_{n}^{0}$ with random values uniformly drawn from $[0,1]$. Fix the three-stage Lobatto IIIC in~\eqref{eqn:RIIA3} as the FP. To study the influence of the nonlinearity parameter $c_L$ on the convergence behavior, consider $c_L \in \{1,5,10\}$, with the fine step size $\Delta t = 0.01/c_L$ and coarsening factors $J=20$ and $50$. Given two initial values $v_1$ and $v_2$ at times $T_{n}-\tau$ and $T_n$, the two-step CP computes $v_3$ for problem~\eqref{eqn:nonlinear} by
\begin{equation}\label{eqn:two-step CP,nonlinear}
  v_3 = R_1(\tau A) v_1 + R_2 (\tau A)v_2 + \tau (\alpha_2 + \beta_2 \tau A)^{-1} \sum_{i=0}^2 \beta_i f(v_{i+1},T_n + (i-1)\tau ), 
\end{equation}
with $f(u,t) = c_L u(1-u^2)+g$. The two-step solver requires solving a nonlinear Poisson equation at each time step. To reduce the sequential cost of the O2CP, we  extrapolate the nonlinearity $f$ in \eqref{eqn:two-step CP,nonlinear} by replacing the unknown $f(v_3, T_n + \tau)$ with the extrapolated value $2f(v_2, T_n) - f(v_1, T_n - \tau)$. The extrapolation-based O2CP {(O2CP-E)} computes $\bar{v}_3$ by
\begin{equation}
\begin{aligned}\label{eqn:extrapol}
    \bar{v}_3 = R_1(\tau A) v_1 + R_2(\tau A) v_2 &+ \tau (\alpha_2 +\beta_2 \tau A)^{-1} \big( (\beta_1 + 2\beta_2) f(v_2,T_n)\\
    &\quad +(\beta_0 - \beta_2) f(v_1,T_n-\tau) \big).
\end{aligned}
\end{equation}
{It requires} only one Poisson {solve}. Fig.~\ref{fig:Ex2} illustrates the performance of the parareal method with SDIRK2 and the OCP-E, alongside the two-step parareal algorithm with BDF2, {the O2CP, and O2CP-E}. Here, OCP-E denotes the coarse propagator obtained from OCP, with the nonlinear term treated by explicit extrapolation (i.e., via an explicit linearization). The parareal algorithm with SDIRK2 and the two-step one with BDF2 and the {O2CP} maintains the convergence factors in the linear cases. These three CPs are implicit. In contrast, the semi-implicit schemes (the OCP-E and {O2CP-E}) are sensitive to the nonlinearity. The two-step parareal method with the {O2CP} converges with a factor smaller than $0.0062$ across all cases, reaching machine precision at the fifth iteration, much faster than the others. The two-step parareal method with the {O2CP} is affected by nonlinearity but still faster than standard solvers and incurs lower sequential costs, i.e., the {O2CP-E is efficient} for weakly nonlinear problems to maximize efficiency. While the parareal method with the OCP-E shows a small convergence factor in linear scenarios, it performs poorly under strong nonlinearity and even underperforms SDIRK2 in initial iterations, cf. {Fig.~\ref{fig:Ex2} (vi)}. 

We further analyze the performance of the five CPs in Table~\ref{tab:compare}. For all tested cases with $c_L = \{1,5,10 \}$ and $J=20,50$, the empirical convergence factor $\widehat{\gamma}_e$ of the O2CP remains smaller than its theoretical value. Although $\widehat{\gamma}_e$ of the O2CP-E exceeds the theoretical value, the method still outperforms the classical choices. The costs of SDIRK2, BDF2 and O2CP are comparable, since all three require implicit iterations and thus are more expensive than the semi-explicit schemes (i.e., the OCP-E and O2CP-E). The O2CP-E runs faster than the OCP-E since each OCP-E step involves solving two Poisson equations with complex coefficients. The two-step parareal method with the O2CP exhibits excellent stability with the nonlinearity strength and $J$, and requires only $4$ iterations to achieve $e<10^{-9}$. Moreover, the O2CP-E requires fewer iterations than the classical schemes across all case and is at least two iteration ahead. 

\begin{figure}[htbp!]
\centering
\begin{minipage}[b]{0.49\textwidth}
\centering
\includegraphics[width=\textwidth,trim={0.cm 0.2cm 0.5cm 1.3cm},clip]{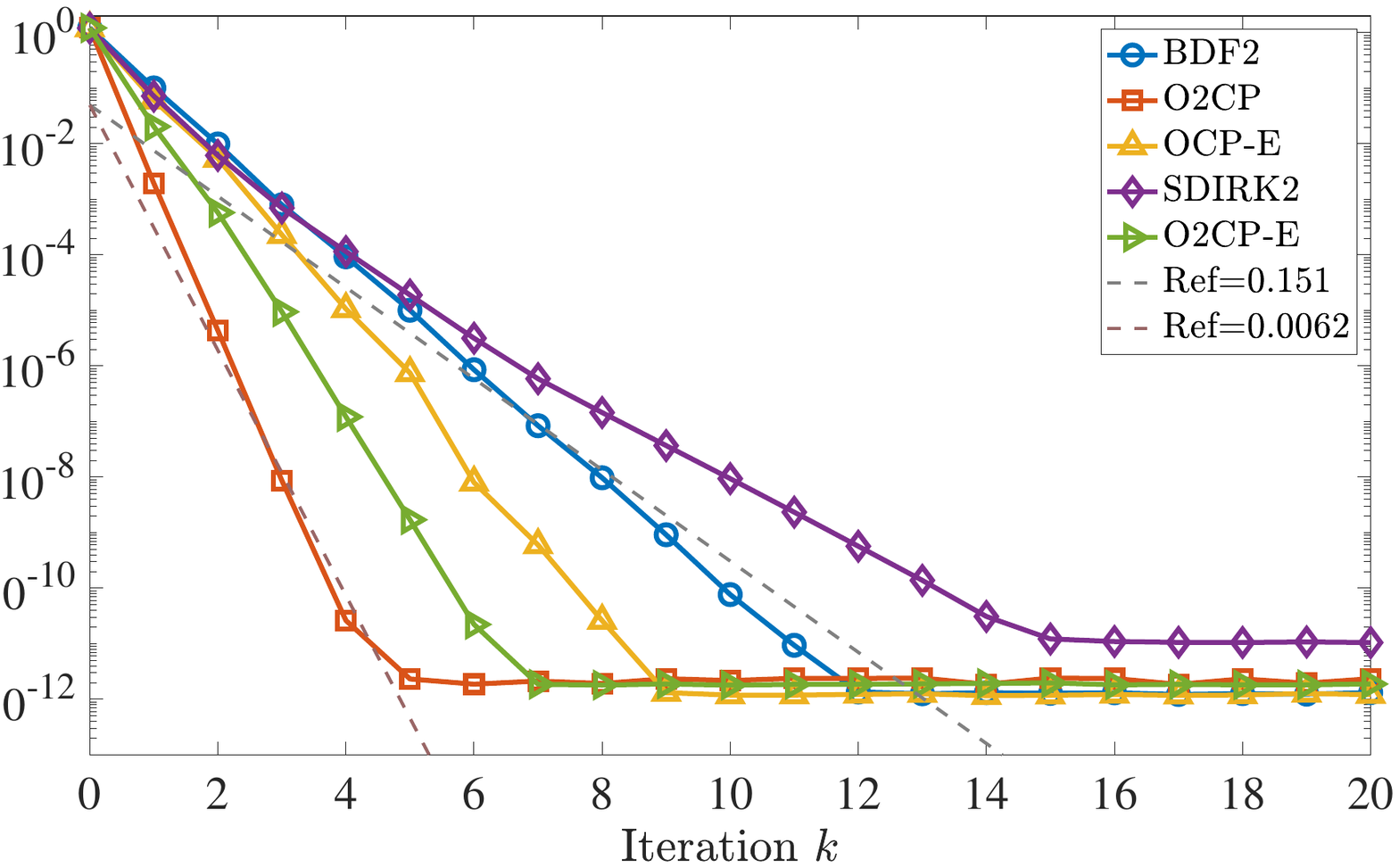}
\caption*{(i) $c_{L}=1$, $J=20$, $\Delta t = 0.01$.}
\end{minipage}
\hfill
\begin{minipage}[b]{0.49\textwidth}
\centering
\includegraphics[width=\textwidth,trim={0.cm 0.2cm 0.5cm 1.3cm},clip]{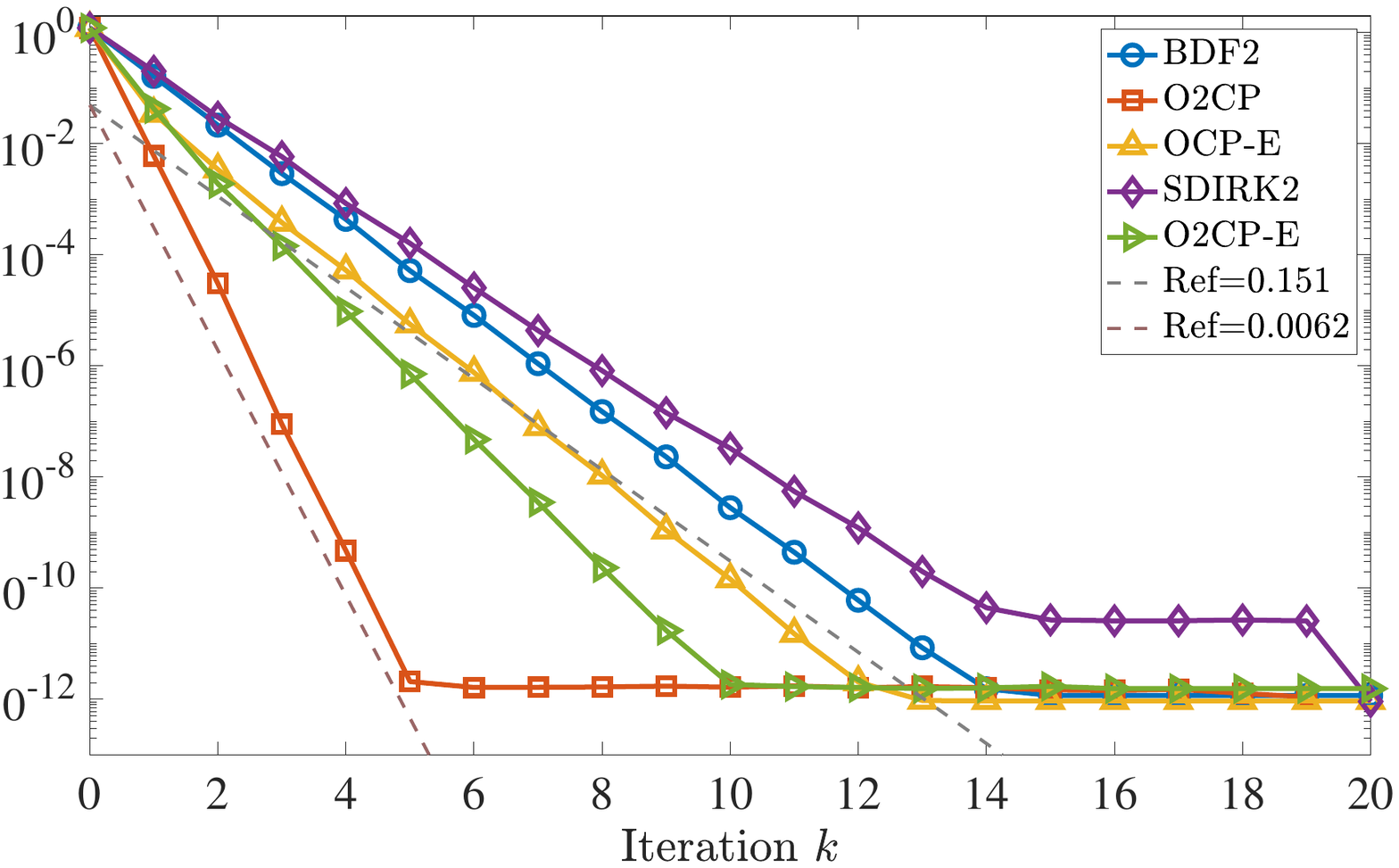}
\caption*{(ii) $c_{L}=1$, $J=50$, $\Delta t = 0.01$.}
\end{minipage}

\vspace{1em} 

\begin{minipage}[b]{0.49\textwidth}
\centering
\includegraphics[width=\textwidth,trim={0.cm 0.2cm 0.5cm 1.3cm},clip]{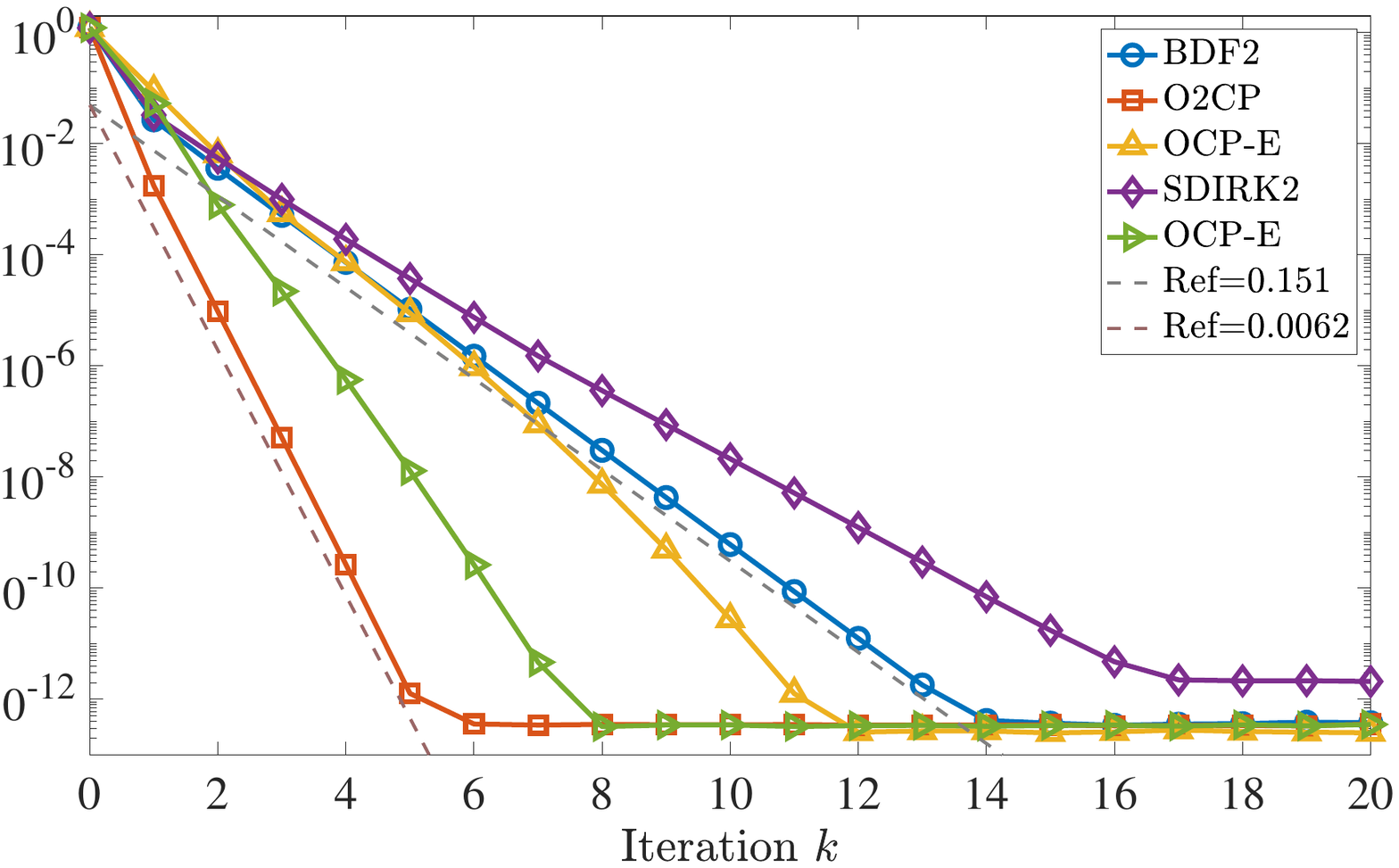}
\caption*{(iii) $c_{L}=5$, $J=20$, $\Delta t = 0.002$.}
\end{minipage}
\hfill
\begin{minipage}[b]{0.49\textwidth}
\centering
\includegraphics[width=\textwidth,trim={0.cm 0.2cm 0.5cm 1.3cm},clip]{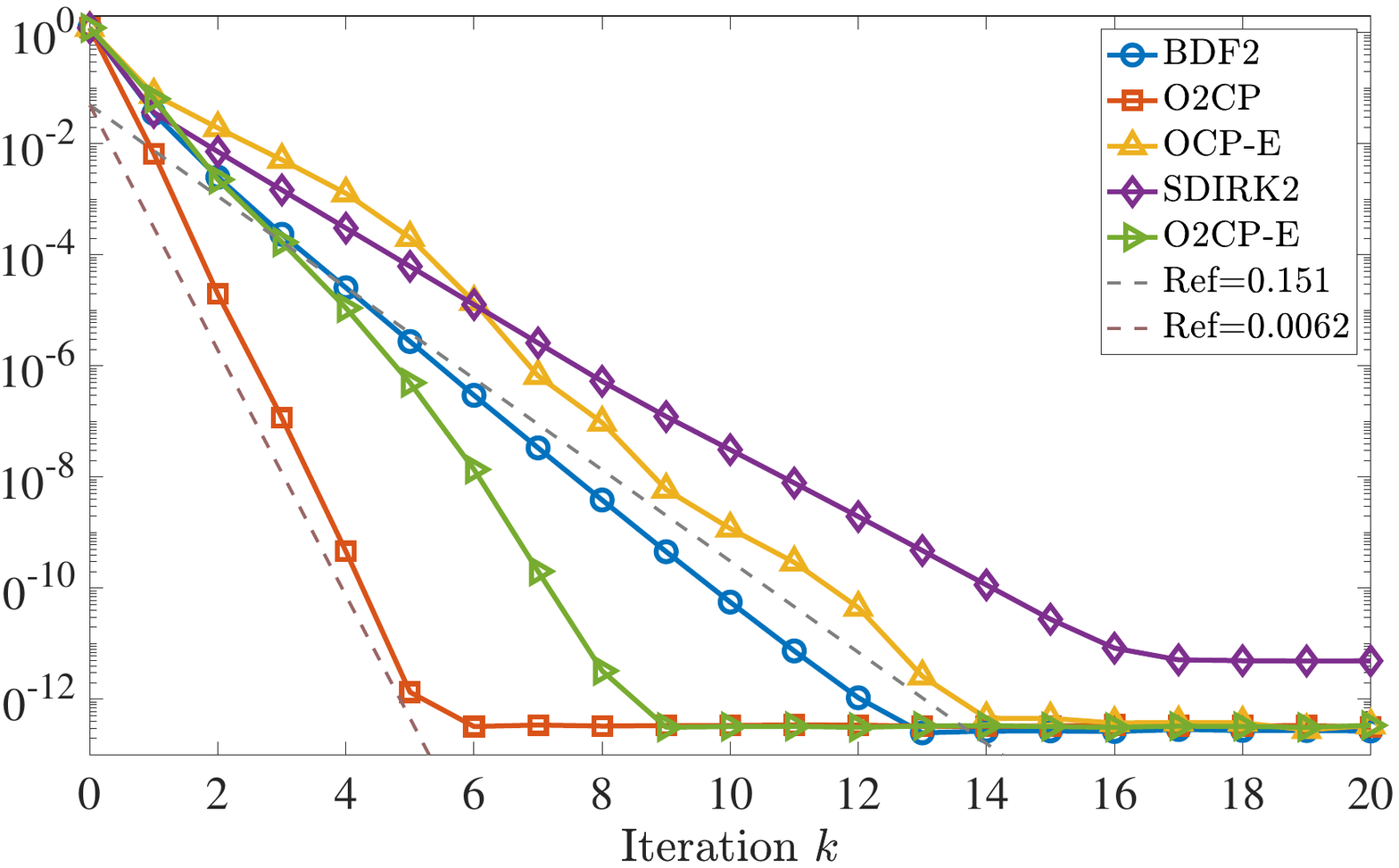}
\caption*{(iv) $c_{L}=5$, $J=50$, $\Delta t = 0.002$.}
\end{minipage}

\vspace{1em} 

\begin{minipage}[b]{0.49\textwidth}
\centering
\includegraphics[width=\textwidth,trim={0.cm 0.2cm 0.5cm 1.3cm},clip]{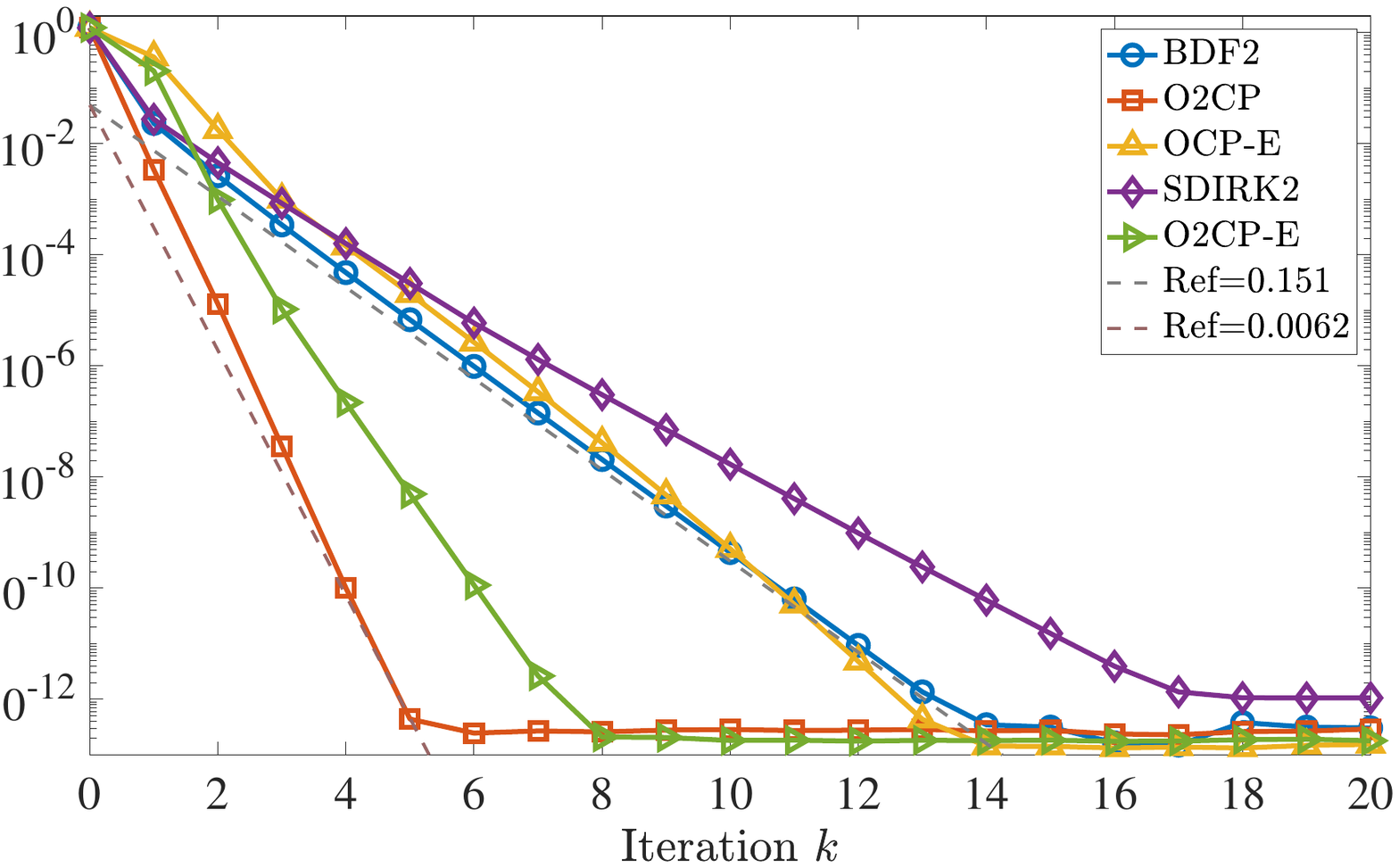}
\caption*{(v) $c_{L}=10$, $J=20$, $\Delta t = 0.001$.}
\end{minipage}
\hfill
\begin{minipage}[b]{0.49\textwidth}
\centering
\includegraphics[width=\textwidth,trim={0.cm 0.2cm 0.5cm 1.3cm},clip]{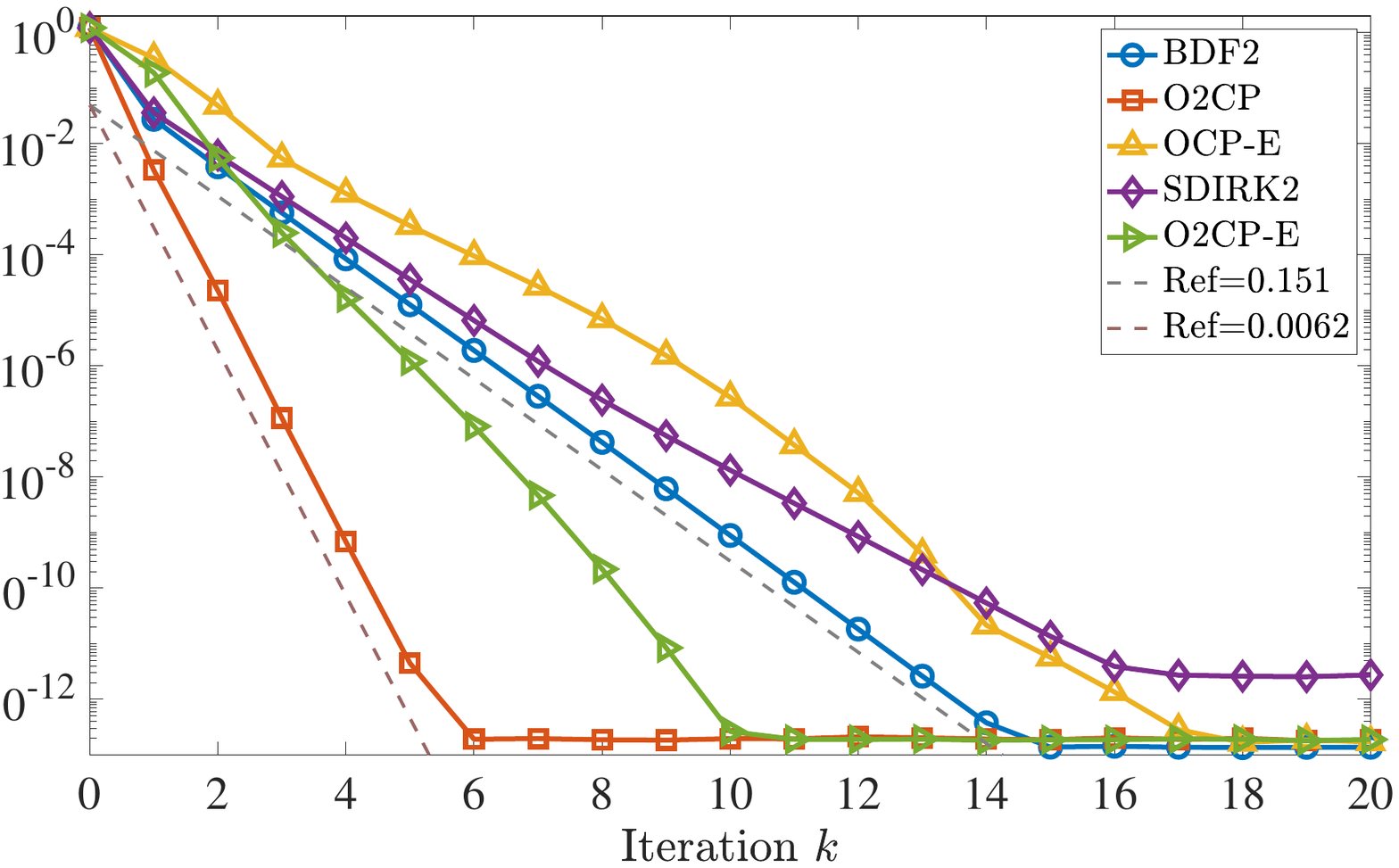}
\caption*{(vi) $c_{L}=10$, $J=50$, $\Delta t = 0.001$.}
\end{minipage}
\caption{The $L^2$ error $e$ for five CPs versus the iteration $k$ for problem~\eqref{eqn:nonlinear} with $c_L \in \{ 1,5,10\}$ and $J \in \{ 20,50\}$.}\label{fig:Ex2}
\end{figure}

\begin{table}[htbp!]
\centering
\caption{The comparison of five CPs for problem~\eqref{eqn:nonlinear} with $c_L \in \{1,5,10\}$, using $J=20,50$: empirical convergence factor $\widehat{\gamma}_e$, cost (in seconds) of the CPs per iteration, and the number of iterations required to achieve an $L^2$ error $e$ below $10^{-9}$.}
\label{tab:compare}
\begin{tabular}{c | c c c c c c}
\toprule
{CP} & \multicolumn{2}{c}{$c_L=1$} & \multicolumn{2}{c}{$c_L=5$} & \multicolumn{2}{c}{$c_L=10$} \\
\midrule
& \multicolumn{6}{l}{$\widehat{\gamma}_e$} \\[2pt]
$J$ & 20 & 50 & 20 & 50 & 20 & 50 \\[3pt]
SDIRK2 & 0.19 & 0.18 & 0.20 & 0.21 & 0.21 & 0.20 \\
BDF2 & 0.097 & 0.14 & 0.13 & 0.099 & 0.13 & 0.13 \\
OCP-E & 0.054 & 0.11 & 0.096 & 0.16 & 0.13 & 0.23 \\
O2CP & \textbf{0.0022} & \textbf{0.0046} & \textbf{0.0044} & \textbf{0.0046} & \textbf{0.0030} & \textbf{0.0051} \\
O2CP-E & \textbf{0.018} & \textbf{0.064} & \textbf{0.028} & \textbf{0.057} & \textbf{0.041} & \textbf{0.064} \\
\midrule
& \multicolumn{6}{l}{Cost} \\[2pt]
$J$ & 20 & 50 & 20 & 50 & 20 & 50 \\[3pt]
SDIRK2 & 0.18 & 0.076 & 0.76 & 0.35 & 1.51 & 0.64 \\
BDF2 & 0.21 & 0.092 & 0.95 & 0.42 & 1.85 & 0.78 \\
OCP-E & 0.021 & 0.0082 & 0.11 & 0.043 & 0.22 & 0.085 \\
O2CP & \textbf{0.24} & \textbf{0.099} & \textbf{1.08} & \textbf{0.46} & \textbf{2.05} & \textbf{0.87} \\
O2CP-E & \textbf{0.016} & \textbf{0.0062} & \textbf{0.077} & \textbf{0.032} & \textbf{0.17} & \textbf{0.061} \\
\midrule
& \multicolumn{6}{l}{Iter.} \\[2pt]
$J$ & 20 & 50 & 20 & 50 & 20 & 50 \\[3pt]
SDIRK2 & 12 & 13 & 13 & 13 & 12 & 12 \\
BDF2 & 9 & 11 & 10 & 9 & 10 & 10 \\
OCP-E & 7 & 10 & 9 & 11 & 10 & 13 \\
O2CP & \textbf{4} & \textbf{4} & \textbf{4} & \textbf{4} & \textbf{4} & \textbf{4} \\
O2CP-E & \textbf{6} & \textbf{8} & \textbf{6} & \textbf{7} & \textbf{6} & \textbf{8} \\
\bottomrule
\end{tabular}
\end{table}

\section{Conclusion}
In this work, we have developed a two-step parareal framework and an optimization-based acceleration strategy for parabolic problems. By extending the classical parareal algorithm to the two-step setting and constructing optimized two-step coarse propagators (O2CPs) that satisfy only the consistency condition, we have achieved a convergence factor of $0.0064$, at the modest cost of two Poisson solves per coarse time step. The analysis confirms the predicted rate. Numerical experiments on both linear and semilinear parabolic equations show the robustness and superior performance of the {O2CPs}. 

The present work opens several promising directions for future research, including the extension to multi-step propagators, fully nonlinear error analysis, and the incorporation of adaptive time-stepping within the multi-step parareal framework. 


\section*{Acknowledgements}
The work of G. Li is supported by Hong Kong Research Grants Council (Project 17317022). The work of K. Zhang is supported by the NSF of China under the grant No. 12271207, and by the fundamental research funds for the central universities. The work of Z. Zhou is supported by by National Natural Science Foundation of China (Project 12422117),
Hong Kong Research Grants Council (15302323) and an internal grant of Hong Kong Polytechnic University (Project ID: P0053938, Work Programme: 4-ZZVA).

\bibliographystyle{plain}
\bibliography{reference}

\end{document}